\titleformat{\subsubsection}[runin]
	{\normalfont\normalsize\bfseries\filcenter}{\thesubsubsection.}{1 ex}{}
\newcommand*{\doi}[1]{doi: \href{https://dx.doi.org/#1}{\urlstyle{rm}\nolinkurl{#1}}}
\newcommand*{\arxiv}[1]{arXiv:  \href{https://arxiv.org/abs/#1}{\urlstyle{rm}\nolinkurl{#1}}}
\declaretheorem[within=section]{theorem}
\declaretheorem[sibling=theorem]{lemma}
\declaretheorem[sibling=theorem,name=Proposition]{prop}
\declaretheorem[sibling=theorem]{corollary}
\declaretheorem[style=remark,sibling=theorem,qed={$\diamondsuit$}]{remark}
\declaretheorem[sibling=theorem,style=definition,qed={$\blacksquare$}]{definition}
\newcommand\cB{\mathcal{B}}
\newcommand\cM{\mathcal{M}}
\newcommand\cS{\mathcal{S}}
\newcommand\NN{\mathbb{N}}
\newcommand\QQ{\mathbb{Q}}
\newcommand\RR{\mathbb{R}}
\newcommand\ZZ{\mathbb{Z}}
\newcommand\nint{{\rm nint}}
\newcommand{\trans}[1]{{#1}^\top}
\newcommand{\defn}[1]{\emph{\color{blue} #1}} 
\newcommand\p{{\bf p}}
\newcommand\x{{\bf x}}
\newcommand\w{{\bf w}}
\newcommand\y{{\bf y}}
\renewcommand\r{{\bf r}}
\renewcommand{\l}{\ell}
\renewcommand\v{{\bf v}}
\newcommand\q{{\bf q}}
\DeclareMathOperator{\Lat}{Lat}
\newcommand{\bna}{\begin{eqnarray}}
\newcommand{\ena}{\end{eqnarray}}
\newcommand{\ba}{\begin{eqnarray*}}
\newcommand{\ea}{\end{eqnarray*}}
\newcommand{\eps}{\varepsilon}
\begin{document}
\title{Reconstruction in one dimension from 
unlabeled Euclidean lengths}
\author{Robert Connelly
\thanks{Department of Mathematics, Cornell University. \texttt{rc46@cornell.edu}. Partially supported by 
NSF grant DMS-1564493.}
\and 
Steven J. Gortler\thanks{School of Engineering and Applied Sciences, Harvard University. \texttt{sjg@cs.harvard.edu}. 
Partially supported by NSF grant DMS-1564473.}
\and Louis Theran\thanks{School of Mathematics and Statistics, University of St Andrews. \texttt{lst6@st-and.ac.uk}}}
\date{}
 
\maketitle 
\begin{abstract}
Let $G$ be a $3$-connected  ordered graph with 
$n$ vertices and $m$ edges. Let  
$\p$ be a randomly chosen mapping 
of these $n$ vertices 
to the integer range $\{1, 2,3, \ldots, 2^b\}$ for $b\ge m^2$. Let $\l$
be the vector of $m$ Euclidean lengths of $G$'s edges
under $\p$. In this paper, we show that,
with high probability over $\p$, we can efficiently reconstruct both 
$G$  and $\p$ from $\l$. 
This reconstruction problem is NP-HARD in the worst case, 
even if both $G$ and $\l$ are given.
We also show that our results stand in the presence of
small amounts of error in $\l$, and in the real setting,
with sufficiently accurate length measurements.

Our method combines lattice reduction, which has 
previously been used to solve random subset sum problems, 
with an algorithm of Seymour that can efficiently reconstruct
an ordered graph given an independence oracle for its matroid.
\end{abstract}

\section{Introduction}

Let $\p$ be a \defn{configuration} of $n$ points $p_i$ on the real line. 
Let 
$G$ be an ordered graph with $n$ vertices, 
numbered $\{1,\ldots, n\}$ (briefly, $[n]$) 
and $m$ edges, indexed by $[m]$.
Having edge indices is very useful when
applying linear algebraic methods to graphs.  We denote the edge with endpoints $i$ and $j$ as $ij$ or $\{i,j\}$. 
For each edge
$ij$ in $G$, we denote its Euclidean length as
$\ell_{ij}(\p):= |p_i-p_j|$. We denote by 
$\l=\ell(\p)$,
the $m$-vector of all of these
measured lengths.
In a \defn{labeled} reconstruction problem, 
we are given  $G$ and $\l$, 
and we are asked to reconstruct $\p$,
up to an unknowable translation and reflection.
Since the graph $G$ and the vector $\l$ are ordered,
this means we are given the bijection between lengths
and edges.
In an \defn{unlabeled} reconstruction problem, 
the input is the vector $\l$, and the 
task is to reconstruct $G$ and $\p$.
As in the unlabeled setting, $\p$ can be 
reconstructed only up to a translation and reflection.
Since $G$ is ordered, reconstructing it requires 
determining a bijection between the edges of $G$ and 
the entries of $\l$.  We note that the number of 
entries of $\l$ determines $m$, but not $n$, and that
$n$ is not in the input.  That problems of this type 
are difficult to solve in the worst case is classical.
Saxe \cite{saxe} showed in 1979 that the labeled version 
of the problem in one dimension is strongly NP-hard.

Two and three-dimensional
labeled and unlabeled reconstruction problems often arise
in applications such as
sensor network localization and molecular shape determination. These problems are even harder than the one-dimensional
setting studied here. But due to the practical importance of these
applications, 
many heuristics have been studied, especially in the labeled setting.
See~\cite{crippen,dattorro} for some surveys of this topic.
There do exist polynomial time
algorithms that have guarantees 
of success, but only  on restricted classes of graphs and configurations.
For example, 
labeled problems can be solved with
semidefinite programming if the 
unknown configuration $\p$ and graph $G$ 
correspond to a 
``universally rigid framework'' $(G,\p)$ \cite{so}.
Labeled problems in dimension $d$
can be solved with greedy algorithms
if the underlying graph 
happens to be ``$d$-trilateral''.  This means that
the graph has an edge set that allows one to localize one or more
$K_{d+2}$ subgraphs, and then iteratively 
glue vertices onto already-localized subgraphs 
using
$d+1$ edges, (and unambiguously glue together already localized subgraphs)
in a way to cover the entire graph.
There is also a huge literature on the related problem of low-rank 
matrix completion; 
see~\cite{LMRCsurvey} for a survey of this topic.
Typically, those results assume randomness 
in the choice of matrix sample locations, roughly corresponding to the graph 
$G$ in our setting.

Unlabeled problems are much harder, and there are fewer  approaches known to be viable.
In~\cite{BK1}, the authors prove that
for all dimensions $d$ and configurations $\p$ outside of a Zariski-closed subset of 
$d$-dimensional configurations, there is a unique way to associate
the values in 
$\l$ to the edges of the complete graph $K_n$.
Thus, when all pairwise distances are given, an exhaustive combinatorial 
search (exponential time) must correctly reconstruct
such configurations. 
For $d$-trilateral graphs, a backtracking approach called LIGA is presented in~\cite{dux1} and
a greedy approach called TRIBOND is presented in~\cite{dux2}.
TRIBOND is shown to run 
in $O(m^k)$ time, where $k$ is a fixed exponent for each
dimension. The work described by these groups has spearheaded 
the research in unlabeled reconstruction.
In~\cite{dux3}, 
the authors define a property for a 
 measured length vector
$\l$ they call 
``strongly generic''. 
They then say that the 
TRIBOND approach must succeed if 
$\l$
is strongly generic.
Notably, no sufficient condition is provided
for any  graph $G$ 
to have even a single strongly generic
measured length vector.
In~\cite{loopsTri} it is shown
that for $d \ge 2$, 
when the underlying graph 
is $d$-trilateral then the TRIBOND algorithm,
given exact measurements of a $d$-dimensional configuration $\p$, must 
correctly reconstruct $\p$, provided that $\p$ is not in an exceptional, Zariski-closed subset
of all $d$-dimensional configurations.
When $d=1$ the authors show that the graph must
be $2$-trilateral (instead of $1$-trilateral)
in order for a greedy trilateration method to work.

The unlabeled reconstruction problem in the one-dimensional 
integer setting 
also goes by names such as ``the turnpike problem''
and the ``partial digest problem''.  Theoretical work
on these problems typically uses worst case analysis and often only 
applies to the complete graph
(e.g.~\cite{lem1,ski1,cie1}).

In this paper we  present  an algorithm that 
can provably solve the one-dimensional unlabeled (and thus, also the labeled)
reconstruction
problems in polynomial time, for each fixed ordered graph, 
with high probability over randomly chosen 
$\p$, as long as we have  
sufficiently accurate length measurements. 
We only require that $G$ is 
$2$-connected for the labeled case and 
$3$-connected for the unlabeled case.
As we will see below, these conditions are
necessary for any method to work.

The algorithm is based on combining the LLL
lattice reduction algorithm~\cite{LLL,LO,frieze} 
together with the graphic matroid reconstruction
algorithm of Seymour~\cite{sey}.
The limiting factor of 
our algorithm in practice is its
reliance on very precise measurements of 
the edge lengths  $\l$. But the existence of  an 
efficient algorithm with
these theoretical guarantees was surprising to us.
Even though the required precision grows with the 
number of edges, we give an explicit bound.  
This is in contrast with
theoretical
results about generic configurations  often found in rigidity theory,
which do not apply to configurations with integer coordinates or 
inexactly-measured lengths.

\subsection{Connectivity assumption}
Clearly we can never detect a translation or reflection of $\p$
in the line, so we will only be interested in recovery of the configuration up
to such \defn{congruence}. 
In the labeled setting, if $i$ is a cut vertex of $G$, then
$G$ is the union of two graphs $H_1$ and $H_2$ with at least 
two vertices each, so that the vertex set of $H_1$ and $H_2$
intersect on exactly $\{i\}$.  Hence, we can reflect the positions of the 
vertices in $H_1$ across $p_i$
without changing $\l$. Such a flip is invisible to
our input data.  To avoid such partial
reflections, we must assume that $G$ is $2$-connected.

The question of whether it is possible to recover $\p$, a configuration in $\RR^d$, uniquely
up to isometry from the the labeled length measurements 
is known in rigidity theory as the question of whether 
the \defn{framework} $(G,\p)$ is \defn{globally rigid} in $d$-dimensions.  If, for generic 
$\p$, $(G,\p)$ is always globally rigid in dimension $d$, 
then $G$ is called \defn{generically globally rigid} in 
dimension $d$  \cite{conGR,ght}.  
It is a folklore result that $G$ is 
generically globally rigid in dimension one if and only 
if it is $2$-connected. 

In higher dimensions, $(d+1)$-connectivity 
is necessary for a graph to be generically globally rigid \cite{hendrickson}, 
but it is not sufficient \cite{connelly}.  In dimension two, there is a combinatorial characterization 
of generically globally rigid graphs \cite{JJ}.  In dimensions $d\ge 3$, 
there is no known combinatorial characterization of generically globally rigid 
graphs, but there is a characterization that can be checked with 
a randomized algorithm that uses linear algebra \cite{ght}.

\begin{figure}
\begin{center} \begin{tabular}{cccc}
  {  \includegraphics[scale=.16]{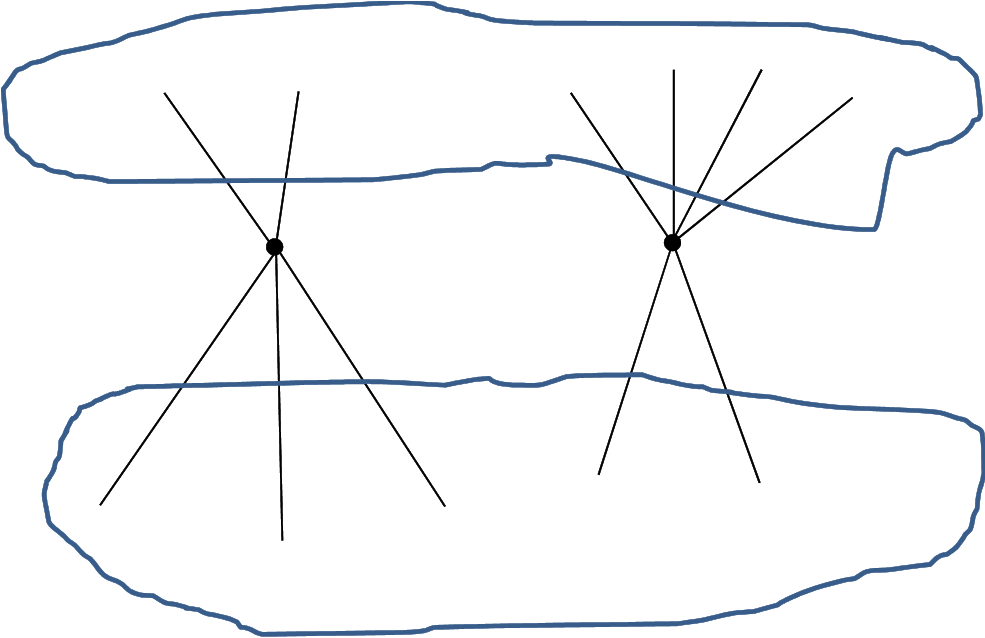}}&
  {  \includegraphics[scale=.16]{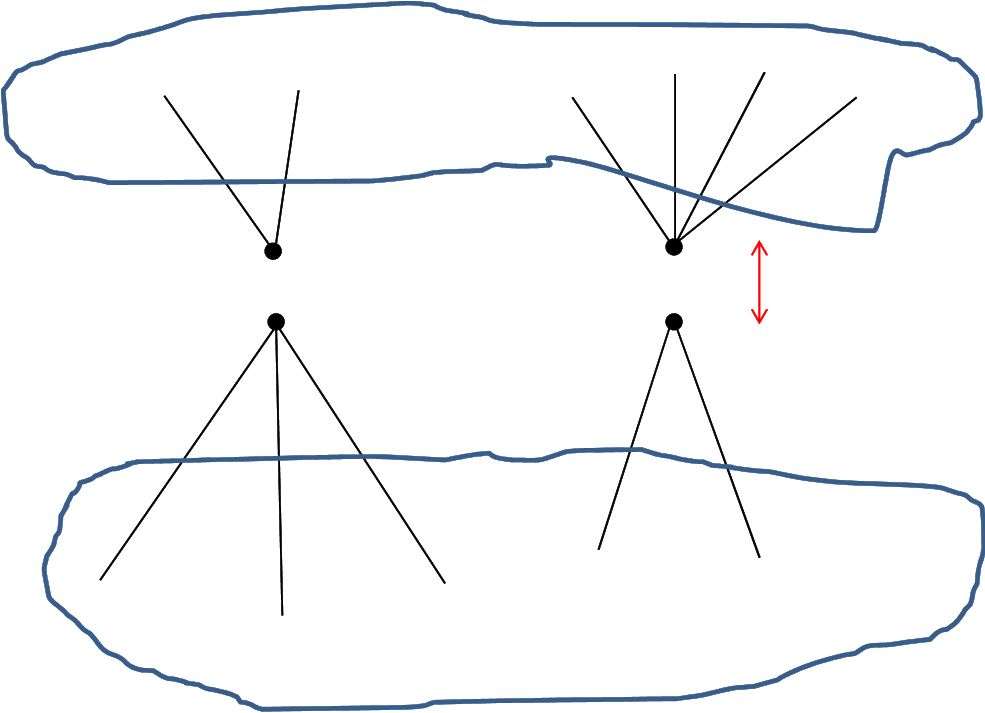}}&
  {  \includegraphics[scale=.16]{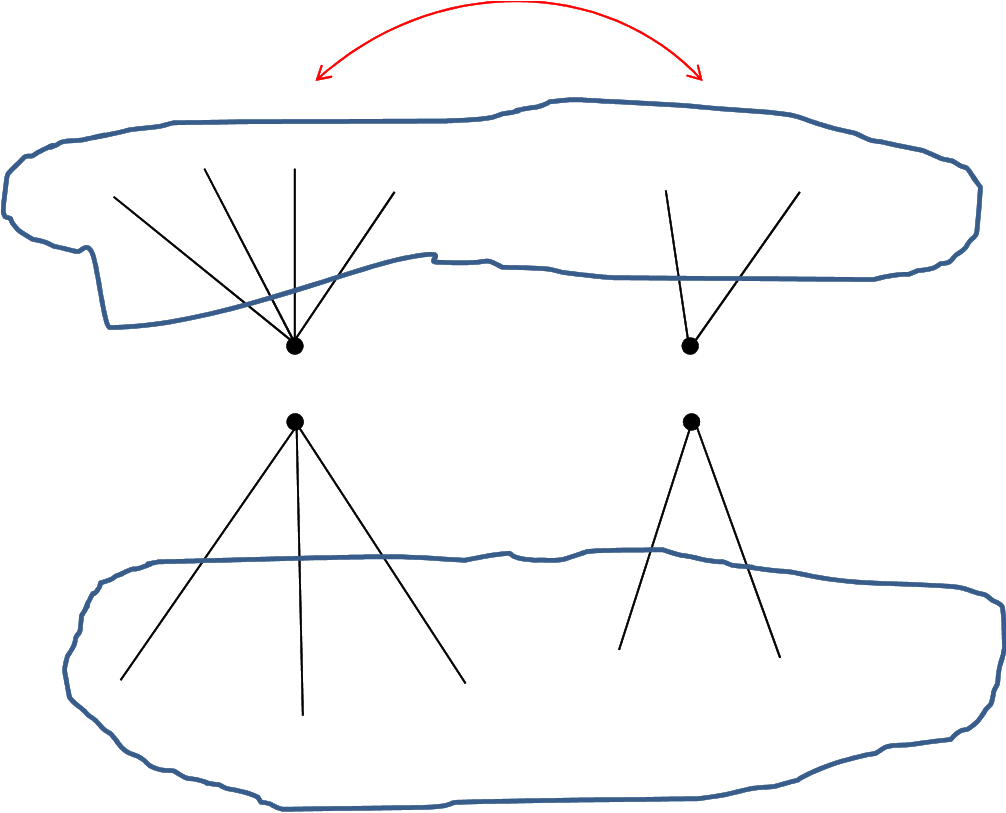}}&
  {  \includegraphics[scale=.16]{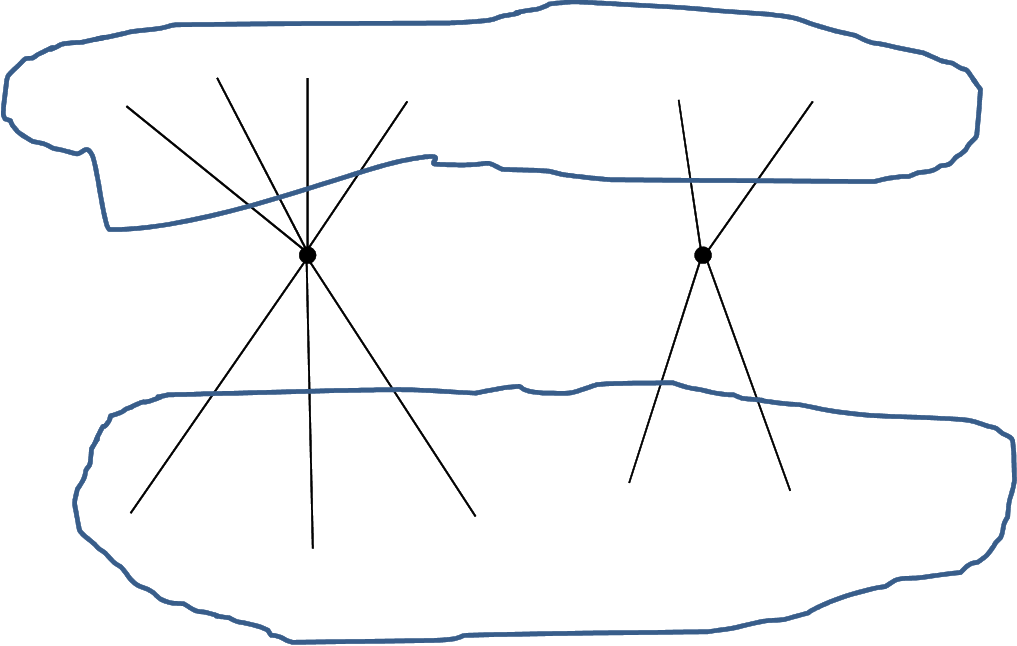}}\\
  (a) & (b) &(c)&(d) \\
\end{tabular} \end{center}
  \caption{The graphs, (a) and (d) are 
  related by a $2$-swap.
Note that the edge lengths in one dimension
are unchanged under a
2-swap.}
  \label{fig:reverse}
\end{figure}

\vspace{.1in}
In the unlabeled setting, 
we can never detect a renumbering of the 
points in $\p$ along with a corresponding vertex renumbering in $G$, so we will
be unconcerned with such isomorphic outputs.
More formally,
let $G = (V,E)$ be an ordered graph with vertex set $[n]$ and with an ordering of the edges given by 
a bijection $\sigma : E\to [m]$.
If $H = (V,E')$ is another ordered graph with the same vertex set, such that there is a 
bijection $\pi : V\to V$ such that $ij\in E$ if and only if $\pi(i)\pi(j)\in E'$ and the edges 
of $H$ are ordered via a bijection $\tau : E'\to [m]$ so that 
$\sigma(ij) = \tau(\pi(i)\pi(j))$, then $G$ and $H$ are isomorphic as ordered graphs, and 
it is impossible to detect whether a measurement vector 
$\l$ arose from $G$ or $H$.

Without labeling information, if $G$ has a 
cut vertex $i$, then, decomposing $G$ into $H_1$ and 
$H_2$ as above, we cannot hope to distinguish measurements 
from $G$ and those from a graph made from identifying $H_1$
and $H_2$ along some other vertex besides $i$, because besides 
reflecting the positions of vertices in $H_2$ across $i$, 
we can also translate them.  Even if $G$ 
is $2$-connected but not $3$-connected, the unlabeled 
problem does not have a unique solution.  Let $\{i,j\}$
be a cut pair and decompose $G$ into graphs $H_1$ and $H_2$
so that their vertex sets intersect exactly on $\{i,j\}$.
Any set of length measurements of $G$ also arises from 
the graph obtained by identifying the copy of $i$ in $H_1$
with the copy of $j$ in $H_2$ and vice versa.  This operation, 
illustrated in Figure~\ref{fig:reverse}, is called a \defn{$2$-swap}.
Graphs that are related by a $2$-swap are not isomorphic, but 
they have the same graphic matroids.
To avoid ambiguities such swapping, we must assume, at least, 
that $G$ is $3$-connected.
There are a number of
subtleties in how to even define an appropriate notion
of generic global rigidity in the unlabeled setting.
These are discussed in~\cite{gugr, loopsTri}.

\subsection{Integer setting}
Moving to an algorithmic setting, we 
will now assume that 
the  points
are placed at integer locations in the set $[B]$, 
with $B=2^b$ for some positive integer
$b$. 
For the labeled problem, we will want an algorithm that
takes as input $G$ and an integer vector $\l\in [B]^m$, and outputs $\p$ (up to congruence).
For the unlabeled problem, we will want an algorithm that
takes as input the integer vector $\l$, and outputs $G$ (up to vertex relabeling) and $\p$ (up to congruence).

Even the labeled problem is strongly NP-HARD by a result of Saxe~\cite{saxe}. 
Interestingly, one
of his reductions is from SUBSET-SUM. 
This is a problem that, while hard in the worst case,
can be efficiently solved,
using the Lenstra-Lenstra-Lovász (LLL) lattice reduction 
algorithm \cite{LLL}, 
for most instances that have sufficiently large integers~\cite{LO}.
Indeed, we will take cue from this insight and use LLL to solve both the
labeled and unlabeled reconstruction problems.

We say that a sequence, $A_m$, of 
events in a probability space occurs \defn{with 
high probability as $m \to \infty$} (WHP)  
if $\Pr[A_m] = 1 - o(1)$.
Here is our basic result.  
\begin{theorem}
\label{thm:main1}
Let $G$ be a 
$3$-connected (resp. $2$-connected) graph with $m$ edges.
Let $b \ge m^2$. Let each $p_i$ be chosen independently and 
uniformly at random from $[2^b]$.
Then there is a polynomial time algorithm that succeeds, 
WHP, on  the unlabeled (resp. labeled) reconstruction problems.
\end{theorem}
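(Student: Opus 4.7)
The plan is to first solve the labeled problem by reducing it to a lattice shortest-vector computation, and then bootstrap to the unlabeled problem by building an LLL-based independence oracle for the graphic matroid of $G$ and feeding it to Seymour's reconstruction algorithm.

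\textbf{Labeled step.} Given $G$ and $\l$, recovering $\p$ up to congruence is equivalent to finding a sign vector $\s\in\{\pm 1\}^m$ such that $\sum_{e\in C}o^C_e s_e\ell_e=0$ for every cycle $C$ of $G$, where $o^C$ is any fixed reference orientation on $C$; after choosing a spanning tree and propagating signs from a root, this is exactly the condition that all non-tree edges close up. Let $B$ be the signed cycle-edge incidence matrix and set $\Lambda:=\kernel(B\cdot\mathrm{diag}(\l))\cap\ZZ^m$, a rank-$(n-1)$ lattice whose basis can be computed in polynomial time from $G$ and $\l$. The true $\s$ lies in $\Lambda$ with $\ell_2$-norm $\sqrt{m}$. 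A Lagarias--Odlyzko style analysis~\cite{LO,frieze} should show that under $b\ge m^2$, the only $\{-1,0,1\}$-valued vectors in $\Lambda$ are $\pm\s$ WHP, and every other nonzero vector of $\Lambda$ has $\ell_2$-norm at least $2^{\Omega(m)}\sqrt{m}$. This gap survives the $2^{O(m)}$ approximation factor of LLL, so the shortest vector returned by running LLL on a basis of $\Lambda$ must be $\pm\s$. Positions are then read off by integrating along any spanning tree.

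\textbf{Unlabeled step.} Given only $\l$, I build an oracle that, WHP, decides whether a queried subset $S\subseteq\{1,\dots,m\}$ of length indices corresponds to a forest in $G$. If $S$ contains a cycle $C$ of $G$, the restriction of the true signing to $C$ is a $\{-1,0,+1\}$-vector in the single-equation lattice $\Lambda_S:=\{x\in\ZZ^S:\sum_{e\in S}x_e\ell_e=0\}$; conversely, when $S$ is a forest the $\ell_e$ for $e\in S$ are absolute values of independent linear combinations of the $\p_i$ and admit no small-coefficient integer relation WHP. The oracle therefore reduces to: run LLL on $\Lambda_S$ and declare ``dependent'' iff the returned short vector is $\{-1,0,1\}$-valued. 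Passing this oracle to Seymour's algorithm~\cite{sey} reconstructs the graphic matroid of $G$; by Whitney's $2$-isomorphism theorem together with the $3$-connectivity of $G$, this pins down $G$ up to vertex relabeling. One final invocation of the labeled algorithm recovers $\p$.

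\textbf{Main obstacle.} The technical core of both steps is the quantitative Lagarias--Odlyzko style claim---for $\Lambda$ in the labeled step and for $\Lambda_S$ across Seymour's polynomially many oracle queries---that the ``graph-theoretic'' short vector is separated by a factor $2^{\Omega(m)}$ from every spurious competitor. Both reduce to anti-concentration estimates of the shape $\Pr_{\p}\bigl[\sum_i c_i\p_i=0\bigr]\le 2^{-b}$ for any fixed nonzero integer vector $c$, union-bounded over the at most $(2M{+}1)^m$ candidate coefficient vectors of $\ell_\infty$-norm $\le M=2^{O(m)}$ and, where necessary, over the orderings of the $\p_i$ that determine the unknown signs in $\ell_e=|\p_i-\p_j|$. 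The bit budget $b\ge m^2$ is tuned precisely so this union bound is $o(1)$ while simultaneously absorbing the $2^{O(m)}$ approximation factor intrinsic to LLL; everything else---computing $\ZZ$-bases, running LLL in polynomial bit-complexity, and union-bounding over Seymour's polynomially many oracle calls---is routine.
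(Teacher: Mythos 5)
Your architecture (LLL $+$ a Lagarias--Odlyzko anti-concentration union bound $+$ Seymour) matches the paper's in spirit, but both of your lattice formulations differ from the paper's and each has a genuine gap. The paper runs LLL \emph{once} on the $(m+1)$-dimensional lattice $\Lat(\l)$ generated by $[\bI_m;\l^T]$, whose medium-norm vectors are shown (WHP) to span exactly the lifted cycle space; the graph oracle for Seymour is then pure linear algebra on the recovered space $S^{\sigma_\p}$, with no further randomness. Your per-query oracle instead reruns LLL on $\Lambda_S$ for each queried subset $S$ and declares ``dependent'' iff the returned short vector is $\{-1,0,1\}$-valued. That test is not sound: LLL only guarantees a $2^{O(|S|)}$ approximation to the shortest vector, so when $S$ contains a cycle (and especially when it spans a cycle space of dimension $\ge 2$) the returned short basis vectors may be integer combinations of several signed cycle vectors, which are short but not $\{-1,0,1\}$-valued; your oracle would then wrongly answer ``independent.'' The fix is the paper's: threshold on \emph{norm} (e.g.\ $\sqrt{2m}\,2^{m/2}$) rather than on entries, and prove that WHP no vector below the threshold lies outside the cycle span.

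The second gap is in the anti-concentration claim itself, which you assert rather than prove, and your kernel-lattice formulation for the labeled step makes it subtler than you acknowledge. A candidate $x$ lies in $\Lambda=\kernel(B\cdot\mathrm{diag}(\l))\cap\ZZ^m$ iff $\sum_e o^C_e x_e\ell_e=0$ for each basis cycle $C$; rewriting each such form as $\sum_i c_i\p_i$ via the true orientation, the coefficient vector $c$ \emph{vanishes identically} precisely when $x$ restricted to $C$ is proportional to the true signing restricted to $C$. So any $x$ that is a (possibly different) scalar multiple of $\s$ on each basis cycle lies in $\Lambda$ with probability $1$, and your $\Pr[\sum_i c_i\p_i=0]\le 2^{-b}$ bound gives nothing for it. You must additionally argue that for a $2$-connected $G$ with a fundamental cycle basis such an $x$ is forced to be a \emph{global} multiple of $\s$ (this is true, via connectivity of the edge-intersection graph of the basis cycles, but it needs a proof). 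The paper's formulation avoids this entirely: it only ever needs to bound the probability for $\hat{\x}$ \emph{linearly independent of the cycle vectors}, in which case the bad measurement vectors $\r$ sit in an affine subspace of dimension $n-2$ and a direct counting argument (including the $n!$ factor for vertex orderings, which you do flag) closes the bound. Relatedly, your statement that ``every other nonzero vector of $\Lambda$ has norm at least $2^{\Omega(m)}\sqrt{m}$'' should read ``every vector not an integer multiple of $\s$,'' since $2\s,3\s,\dots$ are short. With the norm-threshold oracle and the degenerate-case argument supplied, your route goes through under the same bit budget; as written, these two steps are missing.
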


The success probability, the convergence 
rate and the minimal required $b$
can be estimated more carefully (see 
Proposition~\ref{prop:step1} and
Remark~\ref{rem:err3} below).
In any case, what we can prove seems to be pessimistic.
Experiments (see below) show that
our algorithm  succeeds with very high probability on randomly 
chosen 
$\p$ even with  significantly smaller $b$.
 In part, this is because the 
LLL algorithm very often  outperforms its worst case behavior~\cite{ave}
on random input lattices.
On the other hand, 
lattice reduction is a brittle operation, and, for fixed $m$, it will fail
ungracefully once $b$ becomes too small.

\subsection{Errors in input}
To move towards a numerical setting, we need to differentiate between the 
geometric data in $\p$ and the combinatorial data that records the 
order of the points of $\p$ along the line.  Since we only have access to 
the measurements $\l$, the ordering data will be represented by an 
orientation $\sigma_\p$ of a graph $G$ produced by the algorithm;
the orientation of an edge indicates the ordering of its endpoints on 
the line using the configuration $\p$.
To model error or noise, we consider the case where
there is $\{-1,0,1\}$ error, added adversarially,  to each of the integer measurements 
$\ell_{ij}$. Relative to our scale of $2^{m^2}$, a $\{-1,0,1\}$ error 
is very small.

\begin{theorem}
\label{thm:main2}
Let $G$ be an ordered graph with $m$ edges. 
For the labeled setting, assume 
$G$ is $2$-connected.
For the unlabeled setting, assume $G$ is $3$-connected.
Let $b \ge m^2$. Let each $p_i$ be chosen 
independently and uniformly at random from $[2^b]$.
Assume that 
$\{-1,0,1\}$ noise has been added
adversarially to the $\ell_{ij}$.
Then, in both the labeled and unlabeled settings,
there is a polynomial time algorithm that, WHP, 
correctly outputs $G$ and $\sigma_\p$, 
up to reversing the orientation of every edge in $\sigma_\p$.
\end{theorem}
The bound on $b$ given here can be improved slightly, at the cost of a 
more complicated statement.  Remark \ref{rem:err3} describes 
such an improvement. 
In Section~\ref{sec:fail} we discuss ways in which our algorithm can 
fail.

Once we have the combinatorial data of $G$ and $\sigma_\p$, it is 
easy to reconstruct $\p$, up to congruence, when $\l$ has no error.
We can simply use a spanning tree of $G$ to lay out the $p_i$ one at a 
time.  If there are errors, then it is unlikely that there is 
any configuration corresponding to the observed measurements.  However, 
we can apply a linear least squares algorithm to get an 
estimate of $\p$ with small error.
\begin{theorem}
\label{thm:main2b}
Let $G$ be a graph with $m$ edges, and suppose that 
we are given measurements $\l$ of an unknown 
configuration $\p$, with adversarial noise
in the model of Theorem \ref{thm:main2}, and 
an orientation $\sigma_\p$ of $G$ that is 
consistent with the ordering of the points in $\p$ on the 
line.

Then we can efficiently
compute a vector $\p^* \in \QQ^n$ so that, 
there is a vector $\q = \pm \p + t{\bf 1}$ with
\[
\| \p^*-\q\| \le n^{2.5}\sqrt{m}
\]
where $\pm \p$ represents either $\p$ or the configuration 
$-\p$ obtained from negating each of the points $p_i$ in $\p$ 
and $t{\bf 1}$ represents translation of $\p$ 
by an unknown rational number $t$.
\end{theorem}
Since we are working with signals
at the scale of $B=2^{m^2}$, this is a very small error indeed.
We show in Remark~\ref{rem:err1} shows that 
when we have noise of larger magnitude than
$1$, the best approach is to round and truncate the input 
in such a way that the errors on each length measurement 
are in the set $\{-1,0,1\}$, albeit with a smaller $b$. 
As long as this new, smaller, $b$ is sufficiently large, then our theorem will still apply.
Informally, the important thing is not the number of error bits, 
but the number of signal bits.
Additionally, Remark~\ref{rem:err2} 
shows that with sufficiently large $m$,
we can tolerate
any constant error 
maintained in our representation,
or even a number of error bits
that is bounded by a function that
is $o(m)$.

\subsection{Real setting}
\label{sec:real}

Theorem~\ref{thm:main2} 
can be used  in the setting where $\p$ is
real valued.
We suppose that each $p_i$ 
is chosen independently from the uniform distribution on the 
closed interval $[1,B]$, which determines the underlying  $\ell_{ij}$.
We also suppose that each $\ell_{ij}$ is corrupted by an adversarial error
$\eps_{ij}$ with 
$-\frac{1}{2} \le \eps_{ij} \le \frac{1}{2}$, so 
that the input data is 
\[
    \ell_{ij} + \eps_{ij}.
\]
Let $\ell'_{ij}:= |\nint(p_i)-\nint(p_j)|$,
where $\nint$ is the nearest integer. 
Under these assumptions
we have 
\ba
|\ell'_{ij} - \nint(\ell_{ij}+\eps_{ij}) |\le 1.
\ea
As a result, 
the rounded data,  $\nint(\ell_{ij}+\eps_{ij})$,
will  be a $\{-1,0,1\}$ noisy version 
of the lengths obtained from the rounded configuration,
$\nint(\p)$. Hence, our Theorem~\ref{thm:main2} applies to this integer
input.  If we instead start with a real configuration
$\p$ with points lying in the 
closed interval $[0,1]$, scaling by $B$ and replaying the 
above construction shows that we can tolerate 
adversarial error on the order of $\frac{1}{2^{m^2}}$.

If we are given data with higher $\eps$ values, 
we can simply divide by an appropriate constant  so that, afterwards,  
$-\frac{1}{2} \le \eps_{ij} \le \frac{1}{2}$, and then proceed with a smaller effective $b$
and $B$. As long as $b$ is large enough, 
Theorem~\ref{thm:main2} will still apply.
In light of Remark~\ref{rem:err1}, this approach
is strictly better than keeping around additional
error bits.
Though in light of Remark~\ref{rem:err2}, our theorem can 
tolerate $o(m)$ extra  
error bits.

\subsection{Basic ideas}
Consider traversing a cycle $C$ in $G$, which is an ordered list
of distinct vertices (except for the first and last, which are equal)
\[
    v_1, v_2, \ldots, v_k =v_1
\]
so that the edges $v_{i}v_{i+1}$ and $v_{k-1}v_1$ are in $G$.
As we traverse $C$, we consider the configuration $\p$. 
Some of the edges go to the right ($p_{v_{i+1}} > p_{v_i}$) 
and some to the left ($p_{v_{i+1}} <  p_{v_i}$). 
If we sum up the Euclidean lengths of these edges, 
with a $+1$ coefficient for edges that go to the right 
and and a $-1$ coefficient for edges that go to the left, 
the sum is zero, since the cycle traversal starts and ends 
at $p_{v_1}$.  Thus, the cycle $C$ determines an
integer relation on the coordinates of the length vector $\l$.
The integer coefficients in  this relation are all ``small'', 
(the actual bounds we use  will become apparent in Section~\ref{sec:lll})
in fact only $0$, $-1$, and $1$, and, hence, so is the norm of 
the coefficient vector for such a cycle relation.

Let us call $\sigma_\p$ the ordering of the vertices 
along the line.  Repeating the argument for each 
cycle in $G$ defines more small integer relations on 
the coordinates of $\l$.  
We define $S^{\sigma_\p}$ 
to be the linear span, in $\RR^m$, of all the 
coefficient vectors of integer relations defined by cycles 
in $G$.
The space $S^{\sigma_\p}$ contains all the 
small integer relations that are forced on $\l$ because it 
arises from measuring $\p$.  Heuristically, if 
$\p$ is is selected uniformly at random, then 
$\l$ should not satisfy \emph{any other} small integer 
relation, because $\l$ still behaves otherwise randomly.

In Section \ref{sec:lll}, we make this heuristic 
reasoning precise.  What we show is that, if 
 $\p$ is selected uniformly 
at random from configurations of $n$ integer 
points in $[0,B]$, then, WHP any integer relation
that is satisfied by the coordinates of $\l$ and 
has norm below a cutoff value is in $S^{\sigma_\p}$.
This gives us a way to find a basis for $S^{\sigma_\p}$,
provided that we can find a basis for an appropriate lattice 
with small enough basis vectors.

Finding a lattice basis  consisting of vectors 
that are as short as possible is an NP-hard problem.
However, the cutoff we need is large enough so that 
any LLL reduced basis for the lattice we construct will, 
WHP, allow us to find a basis for $S^{\sigma_\p}$.
Our analysis builds on work of 
Lagarias and Odylyzko \cite{LO} and 
Frieze \cite{frieze} on random SUBSET-SUM problems.
The main technical difference between our problem and random 
SUBSET-SUM instances is that $\l$ is obtained 
from $\p$ via a non-linear mapping.  We handle this issue 
using a counting argument that extends those of Frieze \cite{frieze}.  Because the LLL algorithm 
\cite{LLL} can find a reduced basis in polynomial 
time, this step of the algorithm is polynomial.  

Once  we recover $S^{\sigma_{\p}}$, our next task 
is to use it to recover the ordered graph $G$. For this, we will use a 
polynomial time algorithm of Seymour~\cite{sey} that constructs an 
unknown graph using an independence oracle for its graphic matroid.  
We can implement the oracle in polynomial 
time using the basis for $S^{\sigma_\p}$ found in the 
previous step, so we can recover $G$ in polynomial time.
To complete the combinatorial steps, we need to recover the 
orientation $\sigma_\p$.
Using $S^{\sigma_\p}$, we can compute a consistent orientation 
for the edges of any cycle in $G$.  We then find a globally 
consistent orientation, which must be $\pm \sigma_{\p}$,
using an iterative procedure based on an ear decomposition of $G$.  In this step, 
we use $2$-connectivity of $G$, because otherwise an ear 
decomposition will not exist.

Finally, we can use  $G$, $\sigma_{\p}$  and $\l$ 
to determine the positions of each of the $p_i$.
In the noiseless case, this can be done by traversing a spanning 
tree of $G$ and laying out the points. If there is noise, 
we solve a least squares problem to approximate $\p$.

\begin{algorithm}[H]
\SetKwInOut{Input}{input}\SetKwInOut{Output}{output}
\Input{$\l$}
\Output{($G$, $\p$)}
\BlankLine
$S^{\sigma_{\p}} \leftarrow$ ComputeRelations($\l$)\;
$G \leftarrow$ Seymour($S^{\sigma_{\p}}$)\;
$\sigma_\p \leftarrow$  ComputeOrientation($G$, $S^{\sigma_{\p}}$)\;
$\p \leftarrow$ Layout($G$, $\sigma_\p$, $\l$)\;
\Return ($G$,$\p$)\;
 \caption{The full Algorithm}
\end{algorithm}

The 
methods in this paper do not 
directly generalize to higher dimensions.  In fact, 
each of our steps poses a challenging problem once
the dimension becomes at least two.  There is 
no direct analogue to the $\pm 1$ cycle relations, and
instead, we would have to find integer polynomial relations 
on the measured lengths. To this end, we have no known non-trivial bound on 
either the degree or coefficient-sizes of these polynomials.  Even 
if it were possible to recover the polynomials, and then, somehow,
using a (so-far unknown) algorithmic version of \cite{gugr}, 
the unknown graph $G$, the remaining labeled graph realization 
problem is difficult in practice.

\subsection{Related LLL Work}
Recently in ~\cite{gam2}, a  similar approach  has been carefully
applied  to recover an unknown discrete signal from multiple randomly chosen 
large integer linear measurements. 
As described in~\cite{gam2}, an algorithm based on LLL lattice reduction 
can even work in the presence of adversarially chosen error on the 
input, provided that the error is 
sufficiently small.   Theorem 2.1 of \cite{gam2}
provides a detailed analysis of the relationship between the various
parameters.

In other related recent work, 
\cite{andoni} has shown that the LLL
method can be used to efficiently solve 
the correspondence retrieval problem
and the phase retrieval problem with 
no noise. 
Song et al.~\cite{song}
have shown how LLL can be used to
efficiently learn
single periodic neurons in the low noise
regime (this problem also generalizes  phase retrieval).
Work in~\cite{zlat,diak} shows how a number of other problems in machine learning,
such as learning mixtures of Gaussians,
can be solved by LLL, provided that the noise is sufficiently 
small.
The main difference between our setting and those 
of~\cite{LO,gam2,andoni,song,zlat,diak}, is that 
in our case, we are looking for 
a number of  linearly independent small relations in one random 
data vector, instead of  
a single small relation in one or more random data vectors.

In~\cite{LO,gam2}, the data that satisfies a small integer relation ($\l$ in our setting)
is directly generated by a random process.
In our work, as in the work of~\cite{andoni,song,zlat,diak}, 
the input data is obtained indirectly from the random process.
And in particular, in this paper, the process going from the randomly 
chosen configuration $\p$ to the length measurements $\l$
involves an absolute value operation.

In~\cite{gam2} and~\cite{diak}, the authors show that when one
has  $O(m)$ random data vectors that satisfy a common small integer
relation, then LLL can be successfully applied with only 
around $m$ bits of reliable data, instead of the $m^2$ bits that we need here.
For our reconstruction problem, 
we only have one underlying
random configuration $\p$, so we cannot exploit any additional linear 
relations to get a quantitative improvement.

In our random model, each $p_i$ is chosen independently from a uniform
distribution. In rigidity applications, this is a natural assumption 
to make. Presumably, similar results could hold under some other
sampling assumptions for the points $p_i$.
In particular, the works in~\cite{andoni,song,zlat,diak} consider a 
Gaussian random process. What appears to be key is that 
the sampling distribution possesses some sufficient
anti-concentration properties~\cite{zlat,diak}.

\section{Graph theoretic preliminaries}
First we set up some graph-theoretic background.  

\subsection{Graphs, orientations, and measurements}
In this paper, $G = (V,E)$ will denote a simple, undirected ordered
graph with $n$ ordered vertices and $m$ 
ordered edges.  
These orderings will allow us to index various
matrix rows and columns associated with edges and
vertices.
\begin{definition}\label{def: graphs}
Let $G = (V,E)$ be an undirected simple 
graph with $n$ 
vertices and $m$ edges.  We  denote undirected edges 
by $\{i,j\}$, with $i, j\in V$.

An \defn{orientation} $\sigma$ of $G$ is a  map
that assigns each  undirected edge $\{i,j\}$ of $G$
to exactly one of the directed edges $(i,j)$ or $(j,i)$; i.e.,
$\sigma(\{i,j\})\in \{(i,j),(j,i)\}$.   
Informally, $\sigma$ assigns one endpoint of each edge to be 
the tail and the other to be the head.

An \defn{oriented graph} $G^\sigma$ is a directed 
graph obtained from an undirected graph $G$ and 
an orientation $\sigma$ of $G$ by replacing each 
undirected edge $\{i,j\}$ of $G$ with $\sigma(\{i,j\})$.
\end{definition}

We are usually interested in a pair $(G,\p)$ where
$G$ has $n$ vertices and $\p$ is a configuration 
of $n$ points $(p_1, \ldots, p_n)$ on the line.
Such a pair has a natural orientation of 
$G$
associated with it.

\begin{definition}\label{def: point ordering}
Let $(G,\p)$ denote a graph with $n$ vertices
and $\p$ a configuration of $n$ points.  Define
the \defn{configuration orientation}
$\sigma_\p$ by 
\[
    \sigma_\p(\{i,j\}) = 
    \begin{cases}
        (i,j) & \text{if $p_i < p_j$} \\
        (j,i) &\text{if $p_i \ge p_j$}    \end{cases}
    \qquad
    \text{(for all $\{i,j\}\in E$)}
\]
We say that an orientation $\sigma$ of $G$
is \defn{vertex 
consistent} if $\sigma = \sigma_\p$ for some configuration $\p$.
\end{definition}
There are $2^m$ possible orientations, but, when $m > n$, 
many fewer are vertex consistent.
\begin{lemma}\label{lem: point ordering count}
Let $G$ be a graph with $n$ vertices.  There are at most $n!$ 
vertex consistent orientations of $G$.
\end{lemma}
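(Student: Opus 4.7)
The plan is to show that the map $\p \mapsto \sigma_\p$ factors through the symmetric group $S_n$, from which the bound $n!$ is immediate. The key observation is that $\sigma_\p(\{i,j\})$ is determined solely by the sign of $\p_j - \p_i$, so it depends only on the relative ordering of the $n$ coordinates along the real line, not on their numerical values.

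First I handle the generic case in which all coordinates of $\p$ are distinct. Then $\p$ induces a unique sorting permutation $\pi \in S_n$ defined by $\p_{\pi(1)} < \p_{\pi(2)} < \cdots < \p_{\pi(n)}$, and for any edge $\{i,j\}$ the orientation $\sigma_\p(\{i,j\})$ points from whichever of $i,j$ appears earlier in $\pi$ to the other. Thus $\sigma_\p$ is completely determined by $\pi$, and the number of vertex consistent orientations arising from distinct-coordinate configurations is at most $|S_n| = n!$.

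To handle configurations with ties, I show that every vertex consistent orientation $\sigma_\p$ also arises from some $\p'$ with distinct coordinates. The construction is a small perturbation: enumerate each coordinate equivalence class of $\p$ and spread its members apart by amounts $\eps, 2\eps, \ldots$, using a tie-breaking rule consistent with the ``otherwise'' branch of the definition of $\sigma_\p$ (so that if $\p_i = \p_j$ with $i < j$ then $\p'_i > \p'_j$, making $\sigma_{\p'}(\{i,j\}) = (j,i) = \sigma_\p(\{i,j\})$). Choosing $\eps$ smaller than every strict gap appearing in $\p$ guarantees that no strictly oriented edge reverses direction either. The main, mild, obstacle is just the bookkeeping around ties: one must check that the tie-breaking rule genuinely reproduces the original orientation on intra-class edges. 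Since the statement is only an upper bound, no bijection with $S_n$ is needed---just the surjection from $S_n$ onto the set of vertex consistent orientations that the two steps together exhibit.
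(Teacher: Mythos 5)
Your argument is essentially the paper's own proof: the orientation $\sigma_\p$ factors through the ordering of the coordinates, so the number of vertex consistent orientations is bounded by $|S_n| = n!$. The only difference is that you carefully handle tied coordinates via a perturbation respecting the ``otherwise'' branch of the definition, a detail the paper's one-line proof glosses over; this is a correct (and slightly more complete) rendering of the same idea.
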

\begin{proof}
The orientation $\sigma_\p$ is determined by the 
permutation of $\{1, \ldots, n\}$ 
induced by the ordering of the $p_i$ on the line.
\end{proof}

We now introduce some specialized facts about 
a standard linear representation of a graphic 
matroid over $\RR$.  We start by associating 
two vector configurations with a graph $G$.
\begin{definition}\label{def: incidence matrices}
Let $G$ be a graph with $n$ vertices and $m$ 
edges. 
For $i\ne j$, both in $V$, 
a \defn{signed edge incidence vector} $e_{(i,j)}$
has $n$ coordinates indexed by $V$; all the 
entries are zero except for the $i$th, 
which is $-1$ and the $j$th, which is $+1$.  (So that
$e_{(i,j)} = - e_{(j,i)}$.)

Given an ordered graph $G$ and an orientation 
$\sigma$ of $G$,
the 
\defn{signed edge incidence matrix}, $M^\sigma(G)$,
has as its rows the vectors $e_{\sigma(\{i,j\})}$,
over all of the edges $\{i,j\}$ of $G$.

Let $\rho = v_1, v_2, \ldots, v_k = v_1$
be a cycle in $G$.  The \defn{signed 
cycle vector} $\w_{\rho,\sigma}$ 
of $\rho$ in $G^\sigma$ has $m$ 
coordinates indexed by $E$. Coordinates corresponding 
to edges not traversed by $\rho$ are zero.  
The coordinate corresponding to the edge $\{v_{i}, v_{i+1}\}$
is $1$ if $\sigma(\{v_{i}, v_{i+1}\}) = (v_{i}, v_{i+1})$
and otherwise it is $-1$.

If we have fixed some $\p$, we  
can shorten  $\w_{\rho,\sigma_{\p}}$
to $\w_{\rho}$ (with no explicit orientation written).
\end{definition}
It is easy to see that a signed cycle vector
$\w_{\rho,\sigma_{\p}}$ must be in the
kernel of the transpose of $M^\sigma(G)$.
Now we shift focus to an associated linear space.
\begin{definition}
Let $G$ be a graph with $n$ vertices and $m$ edges and $\sigma$
an orientation.  An 
\defn{$\RR$-cycle} is a vector in the kernel of 
the transpose of $M^\sigma(G)$.  The \defn{$\RR$-cycle space} (\defn{cycle space}, 
for short) $S^\sigma\subseteq \RR^m$ is the subspace of all 
$\RR$-cycles. 
A basis of $S^\sigma$ is \defn{induced by graph cycles} if it 
contains only signed cycle vectors.
\end{definition}
For our intended application, it is important that $S^\sigma$ have a 
basis comprising integer vectors of small norm.  It is a classical 
fact that this is the case.
\begin{prop}[{See e.g, \cite[Chapter 5]{oxley}}]\label{prop: matroid basics}
Let $G$ be a connected ordered graph with $n$ vertices and $m$ edges and $\sigma$
an orientation of $G$.   The $\RR$-cycle space $S^\sigma$, 
has  dimension $c := m - n + 1$.  Moreover, it is spanned by the 
signed cycle vectors 
in $G^\sigma$.  In particular, 
$S^\sigma$ has  a basis induced by graph cycles.
\end{prop}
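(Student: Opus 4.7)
The plan is to first pin down the dimension of the cokernel (left null space) of $M^\sigma(G)$ by a standard rank computation on $M^\sigma(G)$ itself, and then exhibit an explicit spanning set of signed cycle vectors via a fundamental-cycle-with-respect-to-a-spanning-tree construction. Throughout, I will work with $M^\sigma(G)$ as an $m\times n$ matrix whose rows are the signed edge incidence vectors, so that the cokernel is $\{\w\in\RR^m : \w^\top M^\sigma(G)=\vecone_n^\top\cdot 0\}$.

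For the dimension: every row of $M^\sigma(G)$ has exactly one $+1$ and one $-1$, hence sums to zero, so $M^\sigma(G)\vecone_n=\veczero$, and $\vecone_n$ lies in the right kernel. Conversely, if $M^\sigma(G)\v=\veczero$, then for every edge $\{i,j\}\in E$ we have $v_i=v_j$, and connectedness of $G$ propagates this equality across the whole vertex set, forcing $\v\in\mathrm{span}(\vecone_n)$. Thus $\rank M^\sigma(G)=n-1$, and the cokernel has dimension $m-(n-1)=c$.

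For membership, I would verify directly that $\w_{\rho,\sigma}^\top M^\sigma(G)=\veczero$ for any simple cycle $\rho$. Fix a traversal $v_0\to v_1\to\cdots\to v_k=v_0$ of $\rho$. The contribution at a vertex $v_t$ comes from the two cycle edges incident to it. In either edge, the pair consisting of the cycle-vector entry (which encodes agreement or disagreement of the traversal with $\sigma$) together with the orientation of the signed incidence vector is designed so that the net contribution at $v_t$ from the edge arriving at $v_t$ along the traversal is $+1$, and from the edge leaving $v_t$ is $-1$. These cancel, and vertices not on $\rho$ obviously receive zero. Hence every signed cycle vector is in the cokernel, and the signs work out identically regardless of $\sigma$.

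Finally, to produce a spanning family, fix a spanning tree $T$ of $G$ and, for each non-tree edge $e$, take the signed cycle vector $\w_{\rho_e}$ associated to the fundamental cycle $\rho_e\subseteq T+e$. There are $m-(n-1)=c$ such vectors, and they are linearly independent: restrict any linear combination to the coordinates indexed by non-tree edges; since $\w_{\rho_e}$ has a $\pm 1$ in the $e$-th coordinate and $0$ in every other non-tree coordinate, this restriction is diagonal (up to signs), so only the trivial combination vanishes. Thus these $c$ simple cycle vectors form a basis of the cokernel. The only mildly delicate step is the sign bookkeeping in the cycle-vector-in-cokernel verification; everything else is a dimension count plus a connectedness argument.
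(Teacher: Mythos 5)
Your proof is correct and complete. The paper does not prove this proposition at all --- it cites it as a standard fact from Oxley --- and what you give is exactly the standard textbook argument: the right kernel of $M^\sigma(G)$ is $\mathrm{span}(\vecone)$ by connectedness, so the cokernel has dimension $m-n+1$; the telescoping sign cancellation shows every signed cycle vector lies in the cokernel; and the $c$ fundamental cycles of a spanning tree give a linearly independent, hence spanning, family. No gaps.
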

If we denote by $-\sigma$ the orientation of $G$ obtained by 
reversing every edge in $G^\sigma$, it is easy to 
see that $S^\sigma = S^{-\sigma}$.
We now describe a family of linear maps
related to edge length measurements.
\begin{definition}\label{def: measurements}
Let $G$ be an ordered graph with $n$ vertices and $\sigma$
an orientation of $G$.  We define the \defn{oriented 
measurement map} $\ell_\sigma : \RR^n \to \RR^m$ by 
\[
    \ell_\sigma(\p)_{\{i,j\}} := 
    \begin{cases}
        p_j - p_i & \text{if $\sigma(\{i,j\}) = (i,j)$} \\
        p_i - p_j & \text{if $\sigma(\{i,j\}) = (j,i)$}
    \end{cases}
\]
For any choice of $\sigma$, $\ell_\sigma$ is a linear map.
We also define the \defn{measurement map} $\ell$ by 
\[
    \ell(\p)_{\{i,j\}} = |p_j - p_i|
\]
The measurement map is only piecewise linear.
\end{definition}

\begin{lemma}\label{lem:stress}
Let $(G,\p)$ be an ordered graph with $n$ vertices and $\p$ a
configuration of $n$ points.  Let $\rho$ be a  cycle of
$G$. Let $\w_{\rho}$ be its signed cycle vector in
$G^{\sigma_\p}$. 
Then 
\[
    \trans{\w}_\rho \ell(\p) = 0
\]
\end{lemma}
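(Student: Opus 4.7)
The plan is to derive the lemma by composing the two results that have just been established. The key observation is that $\ell(\p)$ has been expressed linearly in terms of $\p$ under the configuration orientation $\sigma_\p$, which brings us into the realm where Proposition \ref{prop: matroid basics} applies.

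First, I would invoke Lemma \ref{lem: l = Mp} to rewrite the right-hand side quantity as
\[
    \w_\rho^T \ell(\p) \;=\; \w_\rho^T\, M^{\sigma_\p}(G)\, \p.
\]
So it suffices to show that $\w_\rho^T M^{\sigma_\p}(G) = \veczero^T$, i.e., that the signed cycle vector of $\rho$ (taken with respect to $\sigma_\p$) lies in the cokernel of $M^{\sigma_\p}(G)$. But this is precisely what Proposition \ref{prop: matroid basics} asserts: the cokernel is spanned by signed cycle vectors of simple cycles, and in particular contains $\w_\rho$. Once we have $\w_\rho^T M^{\sigma_\p}(G) = \veczero^T$, right-multiplying by $\p$ yields $\w_\rho^T \ell(\p) = 0$.

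The only subtlety, and what I would check explicitly, is that the sign conventions for the signed cycle vector in Definition \ref{def: incidence matrices} and the signed edge incidence matrix match up correctly so that the cokernel statement from Proposition \ref{prop: matroid basics} really applies with the same $\sigma_\p$ on both sides. Concretely, traversing the cycle $\rho$, the contribution at each edge $\{i,j\}$ is $+1$ times the row $e_{\sigma_\p(\{i,j\})}$ if the traversal agrees with $\sigma_\p$ and $-1$ times that row otherwise; in either case, the contribution to $\w_\rho^T M^{\sigma_\p}(G)$ is $e_{(a,b)}$ where $(a,b)$ is the head-to-tail direction along the traversal at that edge. Summing over the cycle, the contributions telescope along each vertex visited to $\veczero^T$, since each interior vertex of the traversal is entered and left exactly once. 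This is the content of Proposition \ref{prop: matroid basics} and requires no new argument.

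No step here is a genuine obstacle; the ``hard part'' is merely bookkeeping to confirm that the orientation $\sigma_\p$ used in forming $\w_\rho$ is the same one used in forming $M^{\sigma_\p}(G)$, which it is by the choice of notation in the statement. The lemma is then essentially an immediate corollary of Lemma \ref{lem: l = Mp} and Proposition \ref{prop: matroid basics}.
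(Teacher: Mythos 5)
Your proof is correct and follows exactly the paper's argument: apply Lemma \ref{lem: l = Mp} to write $\ell(\p) = M^{\sigma_\p}(G)\p$ and then use Proposition \ref{prop: matroid basics} to conclude that $\w_\rho^T$ annihilates the column span of $M^{\sigma_\p}(G)$. The additional telescoping check of the sign conventions is a nice sanity check but not a departure from the paper's route.
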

\begin{proof}
Unraveling definitions, we see that $\ell(\p) = \ell_{\sigma_\p}(\p)$, and that, in the standard bases for the domain 
$\RR^n$ and codomain $\RR^m$, respectively, the matrix of the linear transformation $\ell_{\sigma_\p}$ is $M^{\sigma_p}(G)$.  
We then compute 
\[
     \trans{\w}_\rho \ell(\p) = \trans{\w}_\rho \ell_{\sigma_\p}(\p) = \trans{\w}_\rho M^{\sigma_p}(G) \p = 0
\]
because $\trans{\w}_\rho$ is in the cokernel of $M^{\sigma_p}(G)$ by definition.
\end{proof}

\section{The algorithm}\label{sec: alg}
In this section we describe the algorithm in 
detail and develop the tools required for its 
analysis. We will focus on the harder, unlabeled 
setting, and afterwards describe the various simplifications
that can be applied in the labeled case.

\subsection{The sampling model}
The algorithm has the following parameters.  We have
a precision $b\in \NN$.  There 
is fixed ordered graph $G$ with $n$ vertices and $m$ 
edges.  
Our algorithm takes as input $\l = \ell(\p) + \eps$,
where $\p$ is selected uniformly at random among 
configurations of $n$ points with coordinates 
in $[2^b]$ 
and $\eps$ is an error vector in $\{-1,0,1\}^m$.

The algorithm does not have access to $n$, only $m$
(via the number of coordinates in $\l$).  All the 
probability statements are taken with respect to $\p$,
which is not part of the input.
This complicates the probabilistic analysis a little
bit relative to \cite{LO,frieze}, but much 
remains similar.

\subsection{Reconstruct $S^{\sigma_\p}$}
\label{sec:lll}

In this section, we will show how, WHP, to reconstruct
$S^{\sigma_\p}$ from $\l$. Our main tool
will be the LLL algorithm~\cite{LLL}, as used
by~\cite{LO}, and analyzed by~\cite{frieze}.

\begin{definition}
For any vector $\r$ in $\ZZ^{m}$
define the \defn{lattice} $\Lat(\r)$ 
in $\ZZ^{m+1}$  as the integer span of
the columns of the following 
$(m+1)$-by-$m$ \defn{lattice generating matrix}

\ba
L(\r):=
\begin{pmatrix}
I_m \\ \trans{\r}
\end{pmatrix}
\ea
where $I_m$ is the $m\times m$ identity matrix.
\end{definition}

\begin{remark}
Frieze~\cite{frieze} multiplies the last row
of $L(\r)$ by a large constant. We do not do that,
so that we can easily deal with
noise. In~\cite{noConst}, we elaborate on this issue
and show how, in the original context of SUBSET-SUM,
Frieze's analysis goes through  without the use
of the large constant.
\end{remark}

In our algorithm we will work over the lattice
$\Lat(\l)$, where $\l$ is the edge length 
measurements of the unknown, configuration $\p$, 
plus adervarially chosen noise.  Figure \ref{alg:cr}
summarizes the algorithm in pseudo-code.

\begin{algorithm}[H]
\label{alg:cr}
\SetKwInOut{Input}{input}\SetKwInOut{Output}{output}
\Input{$\l$}
\Output{$S^{\sigma_{\p}}$}
\BlankLine
$m \leftarrow |\l|$\;
$\Lambda \leftarrow$ LLL($\Lat(\l$))\;
basis $\leftarrow$ threshold($\Lambda$,$\sqrt{2m}2^{m/2}$)\;
$S^{\sigma_{\p}} \leftarrow$ truncate(basis)\;
\Return ($S^{\sigma_{\p}}$)\;
 \caption{The ComputeRelations Algorithm}
\end{algorithm}

If $\x$ is a vector in $\RR^m$ and $a\in \RR$ a real number, 
we denote by $\begin{pmatrix}
    \x \\ a
\end{pmatrix}$ the vector in $\RR^{m+1}$ that has $\x$ as its 
first $m$ coordinates 
and $a$ as its $(m+1)$st. We call $\x$ the \defn{truncation} 
of $\begin{pmatrix}
    \x \\ a
\end{pmatrix}$. 
Now we can describe the algorithm to reconstruct $S^{\sigma_\p}$ in more detail. 
\begin{itemize}
    \item Given, $\l$, we let $m$ be the number of coordinates in $\l$.  This is the number of 
        edges of $G$.  The parameter $m$ determines a threshold value $\sqrt{2m}2^{m/2}$.
    \item We then use the LLL algorithm to compute a reduced basis $\Lambda$ for the lattice 
        $\Lat(\l)$.
    \item Finally, we truncate every vector in $\Lambda$ of norm at most the threshold value $\sqrt{2m}2^{m/2}$
        and return these as the basis for $S^{\sigma_\p}$.
\end{itemize}
The rest of the section contains the analysis of this algorithm. Fix 
an ordered graph $G$, a configuration $\p$ and an error 
vector $\eps$, which determine the 
observed measurements $\l = \ell(\p) + \eps$.
Because $\l$ arises from measuring an integer configuration on the line, 
$\Lat(\l)$ will deterministically contain some short vectors associated 
with generators of the $\RR$-cycle space $S^{\sigma_\p}$.  
\begin{definition}
\label{def:hatS}
Let $G$ be an ordered graph with $n$ vertices and $m$ 
edges, $\eps\in \ZZ^m$, and an orientation $\sigma$ of $G$ be fixed.  Let $S^\sigma$ 
be the $\RR$-cycle space of $G^\sigma$.  We define 
\[
    \hat{S}^\sigma := \left\{\begin{pmatrix}
        \v \\ \trans{\eps} \v
    \end{pmatrix} : \v\in S^\sigma\right\}
\]
The subspace $\hat{S}^\sigma\subseteq \RR^{m+1}$ is the image of $S^\sigma$ under 
an injective linear transformation.  
\end{definition}

\begin{lemma}\label{lem: c med}
Let $G$ be a connected ordered graph with $n$ vertices and $m$ 
edges, $\sigma$ an orientation of $G$, $\p$ a configuration of $n$ points, and 
$\eps\in \{-1,0,1\}^m$ an error vector.  Then $\hat{S}^\sigma$ is contained 
in the linear span of a set of vectors in 
$\Lat(\l)$, 
each of norm at most $\sqrt{2m}$. 
\end{lemma}
\begin{proof}
Let $\w_\rho$ be the signed cycle vector of a cycle 
$\rho$ in the oriented graph $G^{\sigma_\p}$.  
Define $e_\rho = \trans{\w_\rho}\l$.
From Lemma 
\ref{lem:stress}, we have 
\[
    e_\rho = \trans{\w_\rho}\l = \trans{\w_\rho}(\ell(\p) + \eps) = \trans{\w_\rho}\eps
\]
We then define $\hat{\w}_\rho := \begin{pmatrix}
    \w_\rho \\ e_\rho
\end{pmatrix}$. 
For any $\r\in \ZZ^m$, the column span of the lattice generating matrix $L(\r)$ 
is exactly vectors of the form 
\bna
\label{eq:vecInLat}
    \hat{\x} = \begin{pmatrix}
    \x \\ \trans{\r} \x
\end{pmatrix}.
\ena
The vector $\hat{\w}_\rho$ is in $\Lat(\l)$, because it is integral and is in the 
column span of $L(\l)$. 

From Proposition \ref{prop: matroid basics}, there is a basis $\w_{\rho_1}, \ldots, \w_{\rho_c}$
of signed cycle vectors for $S^{\sigma_\p}$, where $c:=m-n+1$.
As the images under the injective linear map 
of Definition~\ref{def:hatS},
the vectors $\hat{\w}_{\rho_1}, \ldots, \hat{\w}_{\rho_c}$ 
are linearly independent, and, by the discussion above, in $\Lat(\l)$.  Because the 
vectors $\w_{\rho_i}$ and $\eps$ are $\pm 1$ vectors, we get, for each $1\le i\le c$, 
the estimate
\[\|\hat{\w}_\rho\|^2_2 = \|\w_\rho\|^2_2 + |e_\rho|^2 \le 2m\]
It now follows that $\Lat(\l)$ contains $c$ linearly independent vectors of norm 
at most $\sqrt{2m}$ that are contained in $\hat{S}^\sigma$.  As $\hat{S}^\sigma$
is $c$-dimensional, these must be a spanning set. 
\end{proof}
Now we proceed with the probabilistic analysis of our algorithm.  The 
idea, which is already present in Frieze's work \cite{frieze}, 
is that if $\l$ (instead of $\p$) has been selected uniformly at random, 
then $\Lat(\l)$ would, WHP, not contain  any vector of 
length $\sqrt{2m}2^{m/2}$.  The point of Lemma \ref{lem: c med} is that, 
because $\l$ is a (corrupted) measurement  of a configuration $\p$, 
$\Lat(\l)$ must contain some ``planted'' short vectors of norm at most 
$\sqrt{2m}$ that span the linear space $\hat{S}^\sigma$.  We won't be able to 
recover these efficiently, so the next step of the analysis 
is to show that, WHP 
(in $\p$), every vector of norm at most $\sqrt{2m}2^{m/2}$ in 
$\Lat(\l)$ lies in $\hat{S}^\sigma$.  Informally, $\Lat(\l)$ 
is still ``random enough'' looking that we can spot the vectors in 
it that arise from cycle relations.
\begin{definition}
We call any vector of norm at most $\sqrt{2m}2^{m/2}$ \defn{medium sized}.
\end{definition}
We defer the proof of the following key estimate to 
Section \ref{sec:pp}.
\begin{prop}
\label{prop:prob}
Let $G$ be a connected graph with $m$ edges and $n$ vertices,  $\eps\in \{-1,0,1\}^m$, 
and $\hat{\x}\in \ZZ^{m+1}$.  If $\p$ is selected uniformly 
at random from $[B]^n$, then the probability that 
$\hat{\x}$ is in $\Lat(\ell(\p)+\eps)$ 
but $\hat{\x} \not\in 
\hat{S}^{\sigma_\p}$ is at most $\frac{2^n n!}{B}$.
\end{prop}
Using the key estimate, we prove the result mentioned in the 
informal discussion above.
\begin{lemma}
\label{lem:isp}
Let $G$ be a connected graph with $m$ edges and $n$ vertices.  
If $\p$ is selected uniformly at random from $[B]^n$,
then, with probability at least 
\[
    1 - \frac
{2^{m^2/2} \;\; 2^{O(m\log m)}}
{B}
\]
for every $\eps\in \{-1,0,1\}^m$, every 
medium sized vector in $\Lat(\ell(\p) + \eps)$
is in $\hat{S}^{\sigma_\p}$.
\end{lemma}
\begin{proof}
For the moment, fix $\eps \in \{-1,0,1\}$.  Set $K=(2m)^{1/2}2^{m/2}$.
If $\hat{\x}\in \ZZ^{m+1}$ is a medium sized integer vector, then 
certainly all its coordinates have magnitude at most 
$K$.  Hence, there are at most $(2K+1)^{m+1} \le (3K)^{m+1}$ medium sized 
integer vectors.  We bound this latter quantity as follows
\[
    \left(
        3(2m)^{1/2}2^{m/2}
    \right)^{m+1}
    \le 
    2^{\frac{m(m+1)}{2}}
    2^{\frac{m+1}{2}}
        3^{(m+1)}
        m^{\frac{m+1}{2}}
     = 2^{m^2/2}2^{O(m\log m)}
\]
With $\eps$ fixed, Proposition \ref{prop:prob} then implies that, if $\p$ is selected 
uniformly at random from $[B]^n$, the probability that there is some medium 
sized $\hat{\x}$ so that $\hat{\x}\in \Lat(\ell(\p) + \eps)$ and $\hat{\x}\notin \hat{S}^{\sigma_\p}$
is at most 
\[
    \frac{2^n n!}{B}2^{m^2/2}2^{O(m\log m)}
\]
Finally, we let $\eps$ vary over $3^m$ choices to conclude that the statement 
fails with probability at most 
\[
3^m\frac{2^n n!}{B}2^{m^2/2}2^{O(m\log m)}
\]
Because $G$ is connected it has at least $n-1$ edges, so we can absorb the 
term $3^m2^nn!$ into the $2^{O(m\log m)}$.  The final failure probability is 
\[
\frac{2^{m^2/2} \;\; 2^{O(m\log m)}}
{B}
\]
as claimed.
\end{proof}
Now picking $B$ to make the probability of success high is 
routine.
\begin{lemma}\label{lem: failprob}
Let $b\ge(\frac{1}{2}+\delta)m^2$ for fixed $\delta > 0$.
For $m$ sufficiently large (depending on $\delta$), 
if $\p$ is selected uniformly at random from $[2^b]^n$, 
with probability at least $1 - \frac{1}{2^{\delta m^2/2}}$,
for every $\eps \in \{-1,0,1\}^n$, if $\hat{\x}$ is medium 
sized and $\hat{\x} \in \Lat(\ell(\p)+\eps)$, then 
$\hat{\x}\in \hat{S}^{\sigma_\p}$.
\end{lemma}
\begin{proof}
For sufficiently large $m$, the quantity $2^{m^2/2}2^{O(m\log m)}$
is bounded by $2^{(\frac{1}{2} + \frac{\delta}{2})m^2}$.  The 
probability of the event in the statement fails is, by Lemma~\ref{lem:isp},
is then at most 
\[
    \frac{2^{(\frac{1}{2} + \frac{\delta}{2})m^2}}{2^{(\frac{1}{2}+\delta)m^2}} = \frac{1}{2^{\delta m^2 /2}}
\]
\end{proof}
At this point, we can analyze the algorithm.
\begin{prop}\label{prop:step1}
For all $\delta > 0$, there is an $M\in \NN$
and a polynomial time algorithm that correctly computes,
with probability at least $1 - 2^{-\delta m^2/2}$,
for any connected ordered graph
$G$ with $n$ vertices and $m\ge M$ edges, and any 
noise vector $\eps\in \{-1,0,1\}^m$,
a basis for the space
$S^{\sigma_p}$ from $\l = \ell(\p) + \eps$.
The probability is with respect to $\p$ chosen uniformly at 
random among configurations of $n$ integer points in
$[B]$, where $B = 2^b$ and $b \ge (\frac{1}{2} + \delta)m^2$. 
\end{prop}
\begin{proof}
Let us recall that, given $\l$, the algorithm 
first computes an LLL reduced basis $\Lambda$,
of $\Lat(\l)$ and then returns the truncation 
of every medium sized vector in $\Lambda$.

By Lemma \ref{lem: c med}, $\Lat(\l)$ contains 
at least $c$ linearly independent vectors of 
norm at most $\sqrt{2m}$.  As these vectors can be 
extended to a lattice basis, the approximation guarantee of an 
LLL reduced basis \cite[Proposition 1.12]{LLL} implies that 
$\Lambda$ must contain 
at least $c$ vectors of norm at most $\sqrt{2m}2^{(m-1)/2}$.
Let us call these $\hat{\v}_1, \ldots, \hat{\v}_c$.

Since  $b \ge (\frac{1}{2} + \delta)m^2$, 
by Lemma \ref{lem: failprob}, there is an $M > 0$ so that, if $m > M$, 
with probability 
at least $1 - 2^{-\delta m^2/2}$, \emph{all} the vectors 
$\hat{\v}_i$ are in the $c$-dimensional linear space $\hat{S}^{\sigma_\p}$.
As the $\hat{\v}_i$ are linearly independent, they are a basis for 
$\hat{S}^{\sigma_\p}$.
The algorithm returns the truncated vectors  $\v_1, \ldots, \v_c$.
By construction, the $\v_i$ are in $S^{\sigma_\p}$.  They 
must also be linearly independent. Otherwise the 
$\hat{\v}_i$ would be linearly dependent as the image of a dependent set under the linear transformation
of Definition~\ref{def:hatS}.  Hence, the algorithm 
returns a basis for $S^{\sigma_\p}$ with 
probability at least $1 - 2^{-\delta m^2/2}$.

The LLL algorithm runs in polynomial time \cite{LLL},
and all the other steps are evidently polynomial, so the 
algorithm to compute a basis of $S^{\sigma_\p}$ runs in 
polynomial time.
\end{proof}
We note, for later, than the polynomial running time of the 
algorithm to find a basis for $S^{\sigma_p}$ implies 
that the coordinates of the vectors it returns have a number of bits 
that is polynomially bounded in $m$.

\subsection{More Error/Fewer Bits}
In the following remarks, we touch on a number of noise related issues.

\begin{remark}
\label{rem:err3}
Lemma~\ref{lem: failprob}, Proposition~\ref{prop:step1},
and our main Theorems~\ref{thm:main1} and~\ref{thm:main2}
can be generalized somewhat.
In particular we can set 
$b\ge(\frac{1}{2}+\delta)m^2-f(m)$ where $f(m)=o(m^{2})$.
In this case, the result can still be shown to hold for sufficiently large 
values of $m$ (which now depends on $f$ as well as $\delta$).
\end{remark}

\begin{remark}
\label{rem:err1}
Suppose that instead of errors in $\{-1,0,1\}$, 
we had integer errors in 
$\{-(2^k-1),...,2^k-1\}$, for some $k$.
The estimate on the size of the small vectors 
in Lemma \ref{lem: c med} increases somewhat 
to $\sqrt{2(2^k - 1)m}$.  This means, that in 
the proof of Lemma \ref{lem:isp}, we need to increase the 
quantity $K$ to $\sqrt{2(2^k - 1)m}2^{m/2}$ and replace the 
$3^m$ choices of $\eps$ with $(2^{k+1}-1)^m$.  This second 
change raises the failure probability in the conclusion 
of Lemma \ref{lem:isp} by a factor of $(\frac{2^{k+1}-1}{3})^m$.

Alternatively, by dividing by $2^{k+1}$, we could model the resulting rational numbers as a real valued input, as in 
Section~\ref{sec:real}, with errors bounded in magnitude by $1/2$. Then we can round the data, putting us 
back to the $\{-1,0,1\}$ error model, but on integers in the interval  
$\{1,\ldots,2^{b-k-1}\}$.
Then in the proof of of Lemma~\ref{lem:isp}, the $B=2^b$ term in the 
denominator would become $2^{b-k-1}$. This would raise the probability of failure instead by only $2^{k+1}$.

Since $(\frac{2^{k+1}-1}{3})^m$ is bigger than 
$2^{k+1}$, 
we see, that if we have 
integer errors in 
$\{-(2^k-1),...,2^k-1\}$, for some $k$, 
we are much better off
rounding off the lowest $k+1$ bits and treating this as 
 $\{-1,0,1\}$ error, than keeping around the noise! This observation is in line with the 
 conclusions of~\cite{gam2}.
\end{remark}

\begin{remark}
\label{rem:err2}
If we do have  errors of
magnitude bounded by some constant $K$, 
the proof of Lemma~\ref{lem:isp} can be modified to accommodate the larger error.
Likewise Lemma~\ref{lem: failprob} and its proof still hold  but the sufficiently large values 
required for $m$ will need to increase.
Indeed, as long as 
$\log(K)=o(m)$
Lemma~\ref{lem: failprob} can still be shown to hold.
For this generalization, the statement of Lemma~\ref{lem:isp} 
needs to have the $O(m\log m)$ term replaced by $o(m^{2})$.
This is in line with~\cite[Theorem 2.1]{gam2}, where the 
role of error is tracked more carefully than we are doing here.
\end{remark}

\subsubsection{Proof of Proposition \ref{prop:prob}}
\label{sec:pp}
Now we develop the proof of Proposition~\ref{prop:prob}.  In 
this section, we fix a connected ordered graph $G$ with 
$n$ vertices and $m$ edges, as well as an error 
vector $\eps \in \{-1,0,1\}^m$ and any vector 
$\hat{\x}\in \ZZ^{m+1}$.  Let $B\in \NN$ be given, and 
define 
\[
    \cS(\hat{\x},\eps) = \{\p \in [B]^n : 
    \text{$\hat{\x}\in \Lat(\ell(\p) + \eps)$ and 
    $\hat{\x}\notin \hat{S}^{\sigma_\p}$} \}
\]
The Proposition asserts that 
\bna 
\label{eq: main estimate}
    |\cS(\hat{\x},\eps)| \le 2^n n! B^{n-1}
\ena
Establishing  \eqref{eq: main estimate} is 
complicated by the fact that the measurement map 
$\ell$ is non-linear.  To deal with this, we first 
define, for each orientation $\sigma$ of $G$, the 
set 
\[
    \cS(\hat{\x},\eps,\sigma) = 
    \{ \p \in \cS(\hat{\x},\eps) : \sigma_\p = \sigma \}
\]
And we have
\bna 
\label{eq: sad} 
 |\cS(\hat{\x},\eps)| = \sum_{\sigma} |\cS(\hat{\x},\eps,\sigma)|
\ena
where the sum ranges over all 
vertex consistent orientations of $G$. By Lemma 
\ref{lem: point ordering count}, there are at most 
$n!$ vertex consistent orientations.

Next we fix a $\sigma$ so that $\cS(\hat{\x},\eps,\sigma)$
is non-empty.  For any $\p$ in $\cS(\hat{\x},\eps,\sigma)$, 
we know that $\ell(\p) = \ell_{\sigma}(\p)$.  
The map  
$\ell_\sigma(\p)$ is linear.
Since $G$ is connected, Proposition \ref{prop: matroid basics}
implies that the rank of $\ell_{\sigma}$ is $n-1$, so it 
has a one-dimensional kernel.  It follows that the 
fiber $\ell^{-1}_\sigma(\ell_\sigma(\p))$ 
consists only of translations of $\p$.  
We observe that there are at most 
$B$ of these where the points $p_i$ remain in the range $[B]$.

Meanwhile, because $\p\in \cS(\hat{\x},\eps,\sigma)$, 
the measurement $\ell(\p) + \eps$ must be in the set 
\[
    \cB(\hat{\x},\sigma) = 
    \{ \r\in \{-1, \ldots, B\}^m : 
        \text{$\hat{\x}\in \Lat(\r)$, 
        $\hat{\x}\notin \hat{S}^{\sigma}$,
        and $\hat{S}^\sigma$ is in the 
        linear span of $\Lat(\r)$ }
    \}
\]
where we have used Lemma \ref{lem: c med} to ensure that 
$\Lat(\ell(\p) + \eps)$ contains a basis for $\hat{S}^\sigma$.
From the observations above, we obtain 
\bna 
\label{eq: bad sad}
    |\cS(\hat{\x},\eps,\sigma)| \le B |\cB(\hat{\x},\sigma)|
\ena

The next task is to bound the size of a
non-empty $\cB(\hat{\x},\sigma)$.
We do this by showing that 
such a $\cB(\hat{\x},\sigma)$ is 
contained in an $(n-2)$-dimensional affine subspace 
of $\RR^m$.  
Our main tool will be the following simple fact.
\begin{lemma}
\label{lem:aff}
Let $A$ be a
$d$-dimensional affine subset of $\RR^{m}$. 
Then 
\[
    |\{-1,0,\ldots, B\}^m \cap A| \le (B + 2)^d \le (2B)^d
\]
\end{lemma}
\begin{proof}
For convenience, set $C^m = \{-1,0,\ldots, B\}^m$ and define 
$C^d =  \{-1,0,\ldots, B\}^d$.
Without  loss of generality, 
the projection of $A$ onto
the first $d$ coordinates of $\RR^m$ by forgetting the last $m-d$ coordinates 
is  bijective (if not, one can pick a different coordinate subspace), 
and in particular injective. This
affine map  sends  points in $A\cap C^m$ to 
points in $C^d$.
It follows that $|A\cap C^m| \le |C^d| = (B+2)^d$.  
Since $B\ge 2$, then $B + 2 \le 2B$.
\end{proof}

Returning to the proof of the Proposition, we let
$\{\hat{\v}_1,\ldots, \hat{\v}_c\}$ be some fixed basis for 
$\hat{S}^{\sigma}$; recall that, because $G$ is connected 
the dimension of $\hat{S}^\sigma$ is $c = m - n + 1$. 
Each of these basis vectors is of the form 
 $\hat{\v}_i =: \begin{pmatrix}
    \v_i \\ f_i
\end{pmatrix}$ for
some $\v_i$ and $f_i$.
Let us record for later the following linear system
with variable $\r \in \RR^m$:
\bna
\label{eq:LinW}
\begin{pmatrix}
\trans{\v}_1 \\ \vdots \\ \trans{\v}_c
\end{pmatrix}
\r =
\begin{pmatrix}
f_1
\\ \vdots \\ 
f_c 
\end{pmatrix}
\ena
Since the $\v_i$ are linearly independent, 
the rank of this system is $c$. 
(For linear independence recall that
if the $\v_i$ were dependent, then the 
$\hat{\v}_i$ would be have to be
linearly dependent as the image of a dependent set under the linear transformation
of Definition~\ref{def:hatS}.)

Now suppose that for some fixed $\r\in \RR^m$, 
$\r\in \cB(\hat{\x},\sigma)$.  This implies that 
$\Lat(\r)$ contains a basis $\hat{\w}_1, \ldots, 
\hat{\w}_c$ of $\hat{S}^\sigma$.  
Each of these basis vectors 
is of the form 
$\hat{\w}_i = \begin{pmatrix}
    \w_i \\ e_i
\end{pmatrix}$, and as 
each $\hat{\w}_i$ is in $\Lat(\r)$,
$\r$ satisfies (recalling Equation (\ref{eq:vecInLat})) the following linear system:
\bna
\label{eq:eqLin}
\begin{pmatrix}
\trans{\w}_1 \\ \vdots \\ \trans{\w}_c
\end{pmatrix}
\r =
\begin{pmatrix}
e_1 \\ \vdots \\ e_c
\end{pmatrix}
\ena
Meanwhile, putting the $\hat{\v}_i$ and $\hat{\w}_j$
as the columns of $(m+1)\times c$ matrices, we can find an 
invertible $c\times c$ matrix $W$ so that 
\[
\begin{pmatrix}
    \hat{\v}_1 & \cdots & \hat{\v}_c
\end{pmatrix} = 
\begin{pmatrix}
    \hat{\w}_1 & \cdots & \hat{\w}_c
\end{pmatrix} W
\]
it now follows that  
$\r$ must also satisfy this next linear system, which 
which is equivalent to \eqref{eq:eqLin}
\[
\trans{W}
\begin{pmatrix}
\trans{\w}_1 \\ \vdots \\ \trans{\w}_c
\end{pmatrix}
\r =
\trans{W}\begin{pmatrix}
e_1 \\ \vdots \\ e_c
\end{pmatrix}
\]
But this system is exactly  \eqref{eq:LinW}. 
So we see that $\r$ satisfies \eqref{eq:LinW}
whether or not the $\hat{\v}_i$ are in $\Lat(\r)$.

Up to this point, we have not yet used the other hypothesis 
coming from our assumption that $\r\in \cB(\hat{\x},\sigma)$:
that $\hat{\x}\in \Lat(\r)$ and $\hat{\x}\notin \hat{S}^\sigma$.
Now we will.  Let $\hat{\x} = \begin{pmatrix}
    \x \\ g
\end{pmatrix}$.  Because $\hat{\x}\in \Lat(\r)$, 
$\r$ satisfies the linear equation 
\[
    \trans{\x}\r = g
\]
and, because 
$\cB(\hat{\x},\sigma)$ is not empty, we have
$\hat{\x}\notin \hat{S}^\sigma$, so the 
rank of the the linear system that adds this equation 
to \eqref{eq:LinW}, namely 
\bna
\label{eq:eqLin2}
\begin{pmatrix}
\trans{\v}_1 \\ \vdots \\ \trans{\v}_c\\ \trans{\x}
\end{pmatrix}
\r =
\begin{pmatrix}
f_1 \\ \vdots \\ f_c \\g
\end{pmatrix}
\ena
must be $c+1$.  Because we assume that $\cB(\hat{\x},\sigma)$ is non-empty,
this final system \eqref{eq:eqLin2} is 
consistent, and has, as its solution space, an  
affine subspace $A$ of dimension $m - (c - 1) = n -2$.
Since \eqref{eq:eqLin2} depends only on the data 
defining $\cB(\hat{\x},\sigma)$, we conclude that 
\[
    \cB(\hat{\x},\sigma)\subseteq A \cap \{-1, \ldots, B\}^m
\]
And now using Lemma~\ref{lem:aff} we get 
\bna 
\label{eq: bad}
|\cB(\hat{\x},\sigma)| \le (2B)^{n-2}
\ena
Finally, plugging \eqref{eq: bad} into \eqref{eq: bad sad}, 
we get 
\[
    |\cS(\hat{\x},\eps,\sigma)| \le 2^n B^{n-1}
\]
whenever the l.h.s. is non-zero.  Plugging this 
estimate into \eqref{eq: sad}, and summing over the 
at most $n!$ non-zero terms, we obtain \eqref{eq: main estimate}, 
and the proposition is proved.
$\hfill\qed$

\subsection{Reconstruct $G$}

Next we show how to reconstruct $G$ from
$S^{\sigma_\p}$. Our main tool will be an algorithm
by Seymour~\cite{sey}.
First we recall some standard facts about graphic matroids.
A standard reference for matroids is \cite{oxley}; the material
in this section can be found in \cite[Chapter 5]{oxley}.

\begin{definition}\label{def: graphic matroid}
Let $G = (V,E)$ be an ordered graph.  The \defn{graphic matroid} of 
$G$ is the matroid on the ground set $E$ that has, 
as its independent sets, the subsets of $E$ corresponding 
to acyclic subgraphs of $G$.
\end{definition}
If $G$ is connected and has $n$ vertices, the rank of its
graphic matroid is $n-1$.
The following lemma is due to Whitney \cite{whitney-matroid}.  
\begin{lemma}\label{lem:ind oracle1}
Let $G^\sigma$ be any orientation of an
ordered graph $G = (V,E)$.  Then 
a subset $E'\subseteq E$ 
is independent in the graphic matroid 
of $G$ if and only if $S^\sigma\subseteq \RR^m$ contains no 
non-zero vector with support contained in $E'$. 
\end{lemma}

\begin{definition}\label{def: ind oracle}
Let $\cM$ be a matroid defined on a ground set $E$.  An 
\defn{independence oracle} for $\cM$ takes as its input
a subset $E'$ of $E$ and outputs ``yes'' or ``no'' depending on 
whether $E'$ is independent in $\cM$.
\end{definition}
Since $G$ has $n$ vertices and is connected, the 
dimension of $S^\sigma$ is $m - n + 1$.  Given 
an $m\times (m - n + 1)$ matrix $A$ that has as its 
columns a basis of $S^\sigma$, we can check whether 
a set $E'\subseteq E$ is independent follows.  Let $A'$ be the 
matrix obtained from $A$ by discarding the rows 
corresponding to $E'$.  If $A'$ has a non-zero 
vector $\v$ in its null space then $A\v$ 
has zeros in coordinates corresponding to 
edges outside of $E'$.  Since $\v$ is non-zero and 
$A$ has independent columns, $A\v$ is non-zero, 
so, by Lemma \ref{lem:ind oracle1}, $E'$ is 
dependent.  On the other hand, if $S^\sigma$ 
contains a non-zero vector $\w$ with support in $E'$, 
there is a $\v$ so that $\w = A\v$, and this $\v$ must be in the 
null space of $A'$. 

It now follows that, if every entry of $A$ has a 
number of bits polynomially bounded in $n$ 
and $m$, we can check independence of $E'$
in polynomial time by forming $A'$ and computing its 
rank with Gaussian elimination.  This gives us a 
polynomial time independence oracle for 
the graphic matroid of $G$.

A striking result of Seymour \cite{sey}, which builds on 
work of Tutte \cite{tutte} (see also \cite{bixby}) is that
an independence oracle can be used to efficiently 
determine a graph $G$ from its matroid.
\begin{theorem}[\cite{sey}]\label{thm: seymour}
Let $\cM$ be a matroid over an ordered ground set.  There is an 
algorithm that uses an independence oracle to determine 
whether $\cM$ is the graphic matroid of some ordered graph $G$.  If
so, the algorithm outputs some such $G$.
If the independence oracle is polynomial time, so 
is the whole algorithm.
\end{theorem}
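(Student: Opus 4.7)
The plan is to use the independence oracle to access cocircuits of $\cM$, and then use the cocircuit structure to reconstruct the graph. Given the oracle, $\rank$ of any subset can be computed by greedily building a maximal independent subset, and from this we can identify individual cocircuits (minimal $C^* \subseteq E$ with $\rank(E \setminus C^*) < \rank(\cM)$) using polynomially many queries per cocircuit. In a graphic matroid $\cM(G)$, cocircuits correspond exactly to minimal edge cuts, and the key observation is that the vertex stars of $G$ form a distinguished collection of cocircuits that covers every element of $E$ exactly twice, with $|\cS| = \rank(\cM)+1$.

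First I would focus on the case where $\cM$ is $3$-connected as a matroid. Whitney's $2$-isomorphism theorem guarantees that a $3$-connected graphic matroid arises from a unique graph up to isomorphism, so no ambiguity arises. The idea is to find a family $\cS$ of cocircuits such that every element of $E$ lies in exactly two members of $\cS$ and $|\cS| = \rank(\cM) + 1$; this family will play the role of the vertex stars. Form a candidate graph $G'$ with vertex set $\cS$ and, for each $e \in E$, one edge joining the two members of $\cS$ that contain $e$. Finally, use the oracle on all subsets of an easily-enumerated family (fundamental circuits of $G'$) to verify that $\cM(G') = \cM$, rejecting the instance if the verification fails.

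The hard part is twofold, and matches the technical core of Seymour's paper. First, one must handle matroids that are not $3$-connected: here the realizing graph is not unique because Whitney $2$-flips produce inequivalent graphs with the same matroid, so the strategy is to decompose $\cM$ along $1$- and $2$-separations into $3$-connected pieces using Tutte's theory (and Bixby's strengthenings), solve each piece by the method above, and then glue the pieces back consistently. Second, one must find $\cS$ without enumerating all cocircuits, since there can be exponentially many. For this I would proceed incrementally: seed $\cS$ with a single cocircuit, and at each step pick an element $e \in E$ covered only once so far, use oracle queries on the fundamental circuits and cocircuits through $e$ to deduce a forced extension of $\cS$, and backtrack only when a local double-cover check fails. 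The main technical burden is proving that this greedy construction needs only polynomially many queries and that the local decisions are uniquely forced in the $3$-connected case; this is exactly the combinatorial heart of Seymour's argument that I would otherwise cite rather than redo in detail.
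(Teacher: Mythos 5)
This theorem is not proved in the paper at all: it is quoted verbatim from Seymour's \emph{Recognizing graphic matroids} \cite{sey} and used as a black box (the surrounding text only explains how to feed it an independence oracle via Lemma~\ref{lem:ind oracle1} and how Whitney's $2$-isomorphism theorem pins down the output, stated separately as Corollary~\ref{cor: seymour}). So there is no in-paper argument to compare yours against; the only ``proof'' the paper offers is the citation, and your proposal ultimately does the same, since you explicitly defer the decisive combinatorial lemmas to \cite{sey}.

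Judged as a proof sketch, your outline has two concrete gaps beyond what you defer. First, the verification step is not sound as stated: checking with the oracle that the fundamental circuits of the candidate graph $G'$ are circuits of $\cM$ does not imply $\cM = \cM(G')$, because a matroid is not determined by one basis together with its fundamental circuits unless it is already known to be binary --- and Seymour's same paper shows that binariness \emph{cannot} be certified with polynomially many independence queries. The correct verification lemma, which lets one conclude $\cM = \cM(G')$ from agreement on a polynomial-size family of circuits and cocircuits by exploiting graphicness of the candidate, is itself one of the main contributions of \cite{sey}, not a routine check. Second, the incremental construction of the star family $\cS$ is asserted rather than established: you would need to prove that the local extensions are forced in the $3$-connected case and that the whole search uses polynomially many queries, which is exactly the content you say you would cite. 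A smaller correction: a vertex star is a cocircuit (bond) only when the corresponding vertex is not a cut vertex, so even the characterization of $\cS$ as ``cocircuits covering each element exactly twice'' already presupposes $2$-connectivity. As a roadmap your sketch is reasonable and close in spirit to the Tutte--Bixby--Seymour line of work, but it does not constitute an independent proof of the theorem.
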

Since the matroid of an ordered
graph $G$ depends only 
on whether each subset of edges is a cycle or 
not, if $G$ and $H$ are isomorphic as ordered 
graphs, they will have the same matroid and so 
Seymour's algorithm can only return an ordered 
graph $H$ that is isomorphic to $G$.
A foundational 
result of Whitney \cite{whitney} says that if an 
(ordered)
graph $G$ is $3$-connected, then $G$ is determined 
up to (ordered) graph isomorphism by its graphic matroid.  

\begin{corollary}\label{cor: seymour}
If $G$ is $3$-connected, then Seymour's algorithm 
outputs an ordered graph isomorphic to $G$, given an 
independence oracle for the graphic matroid of $G$.  
\end{corollary}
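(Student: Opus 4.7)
The plan is to combine Seymour's algorithm (Theorem~\ref{thm: seymour}) with Whitney's classical characterization of which graphs share a graphic matroid. First, I would feed the supplied independence oracle for the graphic matroid $\cM$ of $G$ into Seymour's algorithm. Since $\cM$ is, by hypothesis, the graphic matroid of some graph (namely $G$ itself), Theorem~\ref{thm: seymour} guarantees that the algorithm runs in polynomial time and outputs a graph $G'$ whose graphic matroid is $\cM$; equivalently, $G$ and $G'$ have the same graphic matroid.

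Next I would invoke Whitney's $2$-isomorphism theorem~\cite{whitney}: two graphs have the same graphic matroid if and only if they are related by a finite sequence of the ``Whitney operations'' (vertex-identification/splitting at cut vertices, and $2$-swaps across $2$-vertex cuts, as illustrated in Figure~\ref{fig:reverse}). Graphs related in this way are called $2$-isomorphic. This settles the $2$-connected case of the corollary: $G'$ must be $2$-isomorphic to $G$.

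For the $3$-connected case, I would appeal to the stronger half of Whitney's theorem. If $G$ is $3$-connected, then $G$ has no cut vertex (so vertex-identification/splitting cannot be applied nontrivially), and no $2$-vertex cut separates $G$ into pieces on which a $2$-swap would produce a non-isomorphic graph. Consequently every graph $2$-isomorphic to $G$ is in fact isomorphic to $G$, so the graph $G'$ returned by Seymour's algorithm is isomorphic to $G$, as required.

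I do not expect a genuine obstacle here: the corollary is a short deduction that chains together two results already stated in the excerpt, Theorem~\ref{thm: seymour} and Whitney's classification. The only point that requires care is making explicit that $2$-isomorphism collapses to isomorphism in the $3$-connected case, which is precisely the content of Whitney's original uniqueness theorem for $3$-connected graphs.
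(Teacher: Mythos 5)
Your proposal is correct and follows essentially the same route as the paper: run Seymour's algorithm on the supplied oracle to obtain a graph with the same graphic matroid, then invoke Whitney's theorem that a $3$-connected graph is determined up to isomorphism by its graphic matroid (and a $2$-connected one only up to $2$-isomorphism). The paper treats this as an immediate consequence of the discussion preceding the corollary, exactly as you do.
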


Which for our purposes, together 
with Lemma~\ref{lem:ind oracle1}
gives us:

\begin{prop}\label{prop:step2}
Let $G$ be a $3$-connected ordered graph with $n$ vertices and $m$ edges. Let $\sigma$ be an orientation and $A$ be 
a $m\times c$ matrix with column span 
$S^\sigma$. Suppose that every  entry in $A$ has a bit
complexity that is polynomial in $n$ and $m$.
Then there is an algorithm,
with running time polynomial in $n$ and $m$
that 
correctly computes 
$G$ from $A$, up to isomorphism.
\end{prop}

\subsection{Reconstructing $\sigma_\p$}\label{sec: reconstruct sigma}

Next we show how to reconstruct $\sigma_\p$ from
$G$ and 
$S^{\sigma_\p}$. 

\begin{lemma}\label{lem:ind oracle2}
Let $G^\sigma$ be any orientation of an
ordered graph $G = (V,E)$.  
Let $E'$ be the edge set of a cycle $\rho$ in $G$.
Then any non-zero vector in $S^\sigma$ with 
support contained in coordinates corresponding to $E'$ is a scale of the 
signed cycle vector $\w_{\rho,\sigma}$.
\end{lemma}
\begin{proof}
As a cycle, every proper subset of $E'$ 
must be independent in the graphic matroid of $G$.  Hence, by Lemma \ref{lem:ind oracle1}, 
any non-zero vector in $S^\sigma$ that is supported on coordinates corresponding to 
$E'$ must have no zero entries.  If there are two such vectors that 
are not scalar multiples of each other, then an appropriate linear 
combination would be non-zero and have smaller support.  The resulting 
contradiction says that, up to scale there is a single vector 
in $S^\sigma$ supported on $E'$; since $\w_{\rho, \sigma}$ is such a 
vector, we are done.
\end{proof}
If we have an $m\times (m - n + 1)$ matrix $A$ 
with column span $S^\sigma$, we can find the 
vector $\pm\w_{\rho, \sigma}$ from the edge set $E'$ 
of $\rho$ by finding a non-zero vector $\v$ in the null 
space of the matrix $A'$ obtained by discarding the 
rows of $A$ corresponding to $E'$.  We then have 
$\alpha\w_{\rho, \sigma} = A\v$, for some scalar $\alpha\in \RR$.  Lemma \ref{lem:ind oracle2}
implies that $\v$ is unique up to scale, and then we 
can scale $A\v$ to obtain a vector with only entries $\pm 1$, 
which must be $\pm \w_{\rho, \sigma}$.

In the algorithm to recover $\sigma_\p$, we will 
use the following graph-theoretic notion.
\begin{definition}\label{def: ear decomposition}
Let $G$ be a graph.  An \emph{ear decomposition} of $G$  
is a sequence of cycles 
\[
    \rho_1, \ldots, \rho_k
\]
such that, for each $2\le j\le k$
\[
    \rho_j \setminus (\rho_1 \cup \cdots \cup \rho_{j-1})
\]
is a path, and the edge set of $G$ is the union of 
the cycles $\rho_i$.
\end{definition}
A graph is $2$-connected, if and only if it has an ear decomposition  
\cite[Proposition 3.1.1]{diestel}, and an ear decomposition can be computed in polynomial 
time (e.g., \cite{lovasz-ear}).

\begin{prop}\label{prop:step3}
Let $G = (V,E)$ be a $2$-connected ordered graph, and
$A$ an $m\times (m - n + 1)$ matrix whose  column span,  $S^\sigma$,
is the $\RR$-cycle space of 
an unknown orientation of $G$. 
Also assume that the entries of $A$
have a number of bits that is bounded polynomially in $m$.
Then  we can find $\sigma$
from this data in polynomial time,
up to a global flipping of the entire 
orientation.
\end{prop}
\begin{proof}
Lemma \ref{lem:ind oracle2} allows us to construct, an orientation 
of any cycle in $G$ that is consistent with $\sigma$ or 
the orientation $-\sigma$, in which all the edges are 
reversed.  To reconstruct $\sigma$ (or $-\sigma$), 
we will iteratively orient the whole graph, working 
one cycle at a time, in a way that makes sure that all the 
cycles are oriented consistently with each other, and thus globally correct up to a 
single sign.

We first compute an ear decomposition 
\[
    \rho_1, \ldots, \rho_k
\]
Using the matrix $A$ and Lemma \ref{lem:ind oracle2}, we find a signed 
cycle vector $\w_1 = \pm\w_{\rho_1,\sigma}$.  
Now, for each $j = 2, \ldots, k$, we use $A$ and 
Lemma \ref{lem:ind oracle2} to find a vector 
$\w_j = \pm \w_{\rho_j,\sigma}$ so that the signs 
in coordinates corresponding to  
edges in $\rho_j\cap (\cup_{i=1}^{j-1} \rho_i)$ 
agree with the signs in $\w_1, \ldots, \w_{j-1}$.  
The defining property of an ear decompostion is that 
\[
    \rho_j\setminus (\cup_{i=1}^{j-1} \rho_i)
\]
is a path.  Because $\rho_j$ is a cycle, this implies that 
\[
    \rho_j\cap (\cup_{i=1}^{j-1} \rho_i)
\]
is non-empty.  Hence, exactly one of the two possible 
orientations of $\rho_j$ consistent with $\sigma$ or $-\sigma$ 
will also be consistent with the previous choices of the 
algorithm.
At the end of the process, the coordinate corresponding to 
every edge is either $\pm 1$ in at least one of the vectors 
$\w_j$, and the signs will be the same whenever non-zero.  
Hence, we may read off the orientation $\sigma$, up to 
an unknown sign.  

Each of the steps is polynomial time.  As discussed above, 
an ear decomposition can be computed in polynomial time.
The number of cycles in the ear decomposition is at 
most $m$, so there are at most $m$ calls to the polynomial 
time independence oracle obtained from $A$.  All of the other 
steps are evidently polynomial, and so the algorithm is too.
\end{proof}

Taken together, Propositions~\ref{prop:step1}
(with $\delta$ set to $1/2$),
\ref{prop:step2},
and~\ref{prop:step3}, 
constitute a proof of our 
Theorem~\ref{thm:main2}.

\subsection{Reconstruct $\p$}
Now armed with $\l$, $G$  and $\sigma_\p$
(up to a global flip)
we can reconstruct $\p$, up to translation and
reflection.

In the noiseless setting, we can simply
traverse a spanning tree of $G$ and lay
out each of the vertices in a greedy manner.
This step completes the 
proof of Theorem~\ref{thm:main1}.

In the setting with noise, we can set
up a least-squares problem, finding
the $\p$ that minimizes the squared error
in the length measurements, as follows.
Suppose 
$G$ and $\sigma_\p$ have been successfully reconstructed.
Without loss of generality,
assume that we 
have chosen the global sign
of $\sigma_\p$ correctly as well.
Then we can set up the linear system
\[
    M \x = \l,
\]
where $M$ is  the signed edge incidence matrix
 $M^{\sigma_\p}(G)$.

The matrix $M$ has a kernel spanned by
the ${\bf 1}$ (all ones)
vector, which we now want
to mod out. To do this, let $Q$ be 
an $n$-by-$(n-1)$ matrix whose columns form an 
orthonormal basis for ${\bf 1}^\perp$.
We now consider the linear system
\[
    M Q\y = \l    
\]
To solve for the least squares solution, we set up
the normal equations
\[
  \trans{Q} \trans{M} M Q\y = \trans{Q} \trans{M} \l    
\]
with solution
\[
   \y^* = (\trans{Q} \trans{M} M Q)^{-1} (\trans{Q} \trans{M}) \l    
\]
We then output the configuration $\p^*:=Q \y^*$.

We now want to compare this output to 
$\q:=Q\trans{Q}\p$, which is the configuration $\p$ translated to have
its center of mass at the origin.
Thus we want to bound 
\[||\p^*-\q||=||Q\y^*-Q\trans{Q}\p||=||\y^*-\trans{Q}\p||.\]
Let us define the correct lengths as
$\l' := M \p = (M Q)(\trans{Q}\p)$ and the residual
$\r := \l - \l'$.
Then 
\ba
   \y^* &=& (\trans{Q} \trans{M} M Q)^{-1} (\trans{Q} \trans{M}) [\l' + \r] \\
&=& \trans{Q} \p +
 (\trans{Q} \trans{M} M Q)^{-1} (\trans{Q} \trans{M}) \r 
\ea
and, hence, 
\[
\y^* - \trans{Q}\p = 
(\trans{Q} \trans{M} M Q)^{-1} (\trans{Q} \trans{M}) \r .
\]
Under our noise assumption, 
the norm of $\r$ is at most
$\sqrt{m}$.

Recognizing $\trans{M}M$ as the unweighted Laplacian matrix 
of the graph $G$, we see 
its smallest non-zero eigenvalue
is at least $\frac{1}{n^2}$ \cite[Equation (6.10)]{mohar} and
the largest singular value of $M$
is  at most
$\sqrt{n}$ \cite[Theorem 2.2(c)]{mohar}.  
The only effect of $Q$ on the spectral
decomposition of $\trans{M}M$ is  to
remove the $0$ eigenvalue. 
Likewise, its only effect on $M$
is to remove the $0$ singular value.
Putting 
things together, we see that, up to translation and 
reflection on $\p$, the norm of 
$ \p^*-\q$ is at most $n^{2.5}\sqrt{m}$.
This completes the proof of
Theorem~\ref{thm:main2b}.

\subsection{Labeled Setting}
In the labeled setting the algorithm is even easier.
In particular, we can  skip the step of reconstructing $G$
from $S^{\sigma_\p}$. (That is the only step that depends on
$3$-connectivity.)

Alternatively, since we have access to $G$,  we can also
apply a more expensive, but conceptually more simple algorithm.
Instead of using LLL as in Section \ref{sec:lll} to 
recover $S^{\sigma_\p}$ 
all at once, 
we can compute an ear decomposition of $G$ 
(as described in Section \ref{sec: reconstruct sigma}), 
and then extract the orientation of the edges by 
solving a single SUBSET-SUM problem (using LLL)
separately
for each cycle. 
(These are not independent SUBSET-SUM problems, but 
there are at most $n$ of them, so the failure 
probability is still exponentially small in $m$.)
Then we continue with the computation as described in
Section~\ref{sec: reconstruct sigma}.

\section{Detecting failures}
\label{sec:fail}
Theorem \ref{thm:main1} provides only a high probability 
guarantee of reconstructing $G$, in the unlabeled 
setting, and $\p$, in the labelled and unlabelled 
settings.  We now discuss where the algorithm can fail, and 
some tests to detected failure.

The first observation is that the only step requiring a 
probabilistic analysis is the computation of $S^{\sigma_\p}$
using an LLL reduced basis for $\Lat(\l)$.  After this, 
the rest of the steps are deterministic and, if $S^{\sigma_\p}$
is correctly computed, will compute $G$ and then $\p$. 
Hence, every failure of 
the algorithm arises from failure of the LLL step 
to correctly compute $S^{\sigma_\p}$.  Moreover, 
since the truncations of the medium sized vectors in $\Lambda$ 
span a subspace $S$ of $\RR^m$ that contains $S^{\sigma_\p}$, 
failure occurs iff $S\supsetneq S^{\sigma_\p}$.

One example of how this can happen is when the 
pair $(G,\p)$ is not ``globally rigid''.  This means 
there is another configuration $\q$ so that $\l = \ell(\p) = \ell(\q)$,
but $\q$ is not related to $\p$ by translation or reflection.  
Since we assume that $G$ is $3$-connected, there is some 
cycle in $G$ that is oriented differently in $\sigma_\p$ 
and $\sigma_\q$.  Since $\ell$ satisfies the cycle relations
from both $G^{\sigma_\p}$ and $G^{\sigma_q}$, the lattice 
reduction will find more than $c$ medium-sized relations.
We don't know that failures of global rigidity are the 
only way that the lattice reduction step can fail.

\subsection{Labeled setting} 
In labeled problems, the algorithm has access to $n$, and 
thus also $c$.  Since the computation of $S^{\sigma_\p}$
succeeds if and only if $\Lambda$ has exactly $c$ medium 
sized vectors in it, the algorithm can stop and 
report failure if there are more than $c$ medium sized 
vectors in $\Lambda$.

\subsection{Unlabeled setting} In the unlabeled setting, the 
algorithm does not have access to $c$, and so there is a 
possibility that it reports an incorrect result.  Here we 
describe some, to our knowledge non-exhaustive, tests that 
can detect a failure to correctly compute a basis for 
$S^{\sigma_\p}$.

Seymour's algorithm may report that the matroid represented by 
the linear space $S$ obtained in the lattice reduction 
step is not graphic.  Since $S^{\sigma_\p}$ does represent 
a graphic matroid, whenever Seymour's algorithm fails, 
$S\neq S^{\sigma_\p}$.

Even if an incorrectly computed $S$ represents a graphic 
matroid, of a graph $H$ necessarily not isomorphic to $G$,
the orientation-finding step may fail.  If $H$ is not $2$-connected,
then the failure will occur when that step tries to find 
an ear-decomposition.  
If $H$ is $2$-connected, the algorithm 
may find an orientation 
$H^\tau$ of $H$ that is not  a DAG, and, 
hence, $\tau$ is not vertex consistent.

Finally, if the previous tests all pass, 
we can check that the output configuration 
$\q$ has length measurements consistent with $\l$.
If there is no noise, the check is exact: we can verify 
that each edge of the output graph $G$ has, in $\ell(\q)$, 
the same length as the corresponding entry of $\l$.  If not, 
the algorithm has failed.  When there is noise, the test is 
not exact, but a residual error in the measured lengths 
larger than that allowed by Theorem \ref{thm:main2b} indicates
the algorithm has failed.

While we expect these tests are useful in practice, 
the possibility that all of them pass for an 
incorrectly computed $S^{\sigma_\p}$ remains a (low probability)
possibility.

\section{Denser graphs}
The limiting factor of our algorithm is its need for highly accurate measurements. Here we describe an option 
for certain denser graphs, which can require many fewer bits of accuracy.
\begin{definition}
We say that $G$ has a  \defn{$k$-basis}, for some $k$, and orientation $\sigma$, the $\RR$-cycle space of 
$G^\sigma$ has a cycle induced basis $\{\w_{\rho,\sigma}\}$, where each of the  cycles $\rho$
has at most $k$ edges.
\end{definition}
If we assume that $G$ has a $k$-basis,  then, given a noisy measurement 
$\l = \ell(\p) + \eps$, with $\eps\in \{-1,0,1\}^m$ we can compute a 
basis for $\Lat(\l)$ by enumerating every vector 
\[
\hat{\w} = \begin{pmatrix}
    \w \\ e
\end{pmatrix}
\]
where $\w$ has at most $k$ non-zero entries, each of which is $\pm 1$ and $-k\le e\le k$
and checking whether it is in $\Lat(\l)$ using the lattice generating matrix $L(\l)$.  
Assuming that $G$ has a $k$-basis, a maximal linearly-independent subset of the vectors of this form 
will span a subspace of $\Lat(\l)$ that contains $\hat{S}^{\sigma_\p}$.
A similar analysis to that in Section \ref{sec:lll}, shows that the span of the vectors returned 
by the brute-force procedure is, with high probability over a uniformly selected $\p$, 
a basis for $\hat{S}^{\sigma_\p}$. Once we have recovered $\hat{S}^{\sigma_\p}$, we can proceed 
as if we had used LLL.

The complexity of the brute-force approach to find a $k$-basis for $S^\sigma$ is exponential 
in $k$, but polynomial in $m$.  If $k$ is fixed and independent of $m$, it is polynomial, 
albeit much slower than LLL for even modest $k$.  The advantage of the brute-force 
procedure is that, because it solves the shortest-vector problem for $\Lat(\l)$ exactly, 
we can take $b$ to be any quantity that is $\omega(m\log m)$, since we no longer 
have to account for the approximation factor in the LLL algorithm.

\section{Experiments}
\label{sec:exp}
We implemented the LLL step in our algorithm to explore how it performs on 
different graph families.  We looked at three family of graphs:
cycles; nearly $3$-regular graphs (with all vertices degree $3$ except 
for one which has degree $4$ or $5$); and complete graphs.  

\begin{figure}
    \centering
    \begin{center} \begin{tabular}{cccc}
  {  \includegraphics[scale=.4]{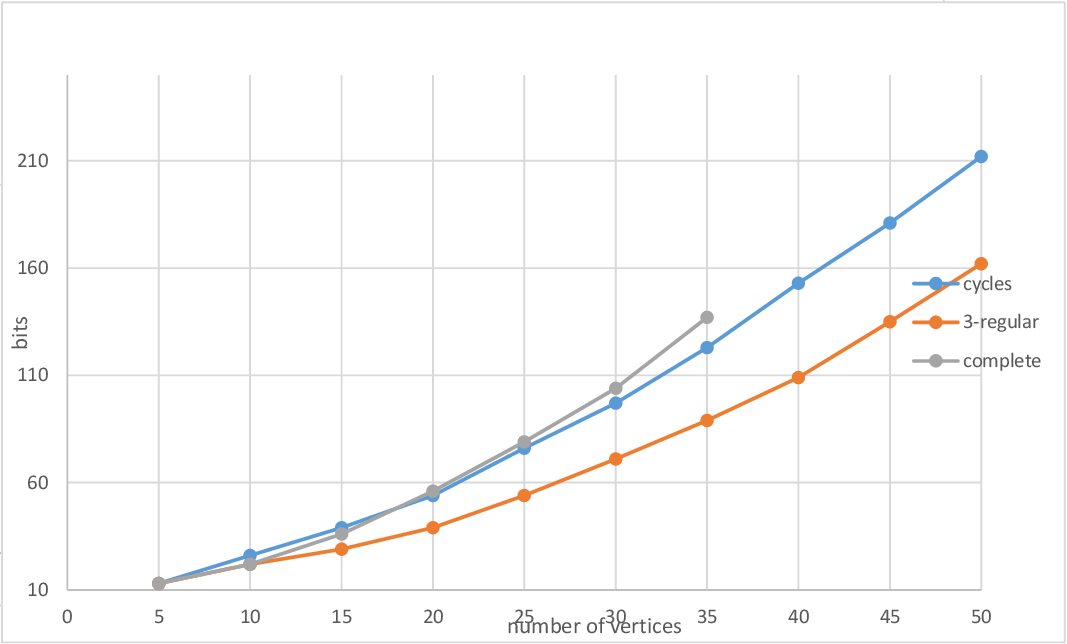}}&
  {  \includegraphics[scale=.4]{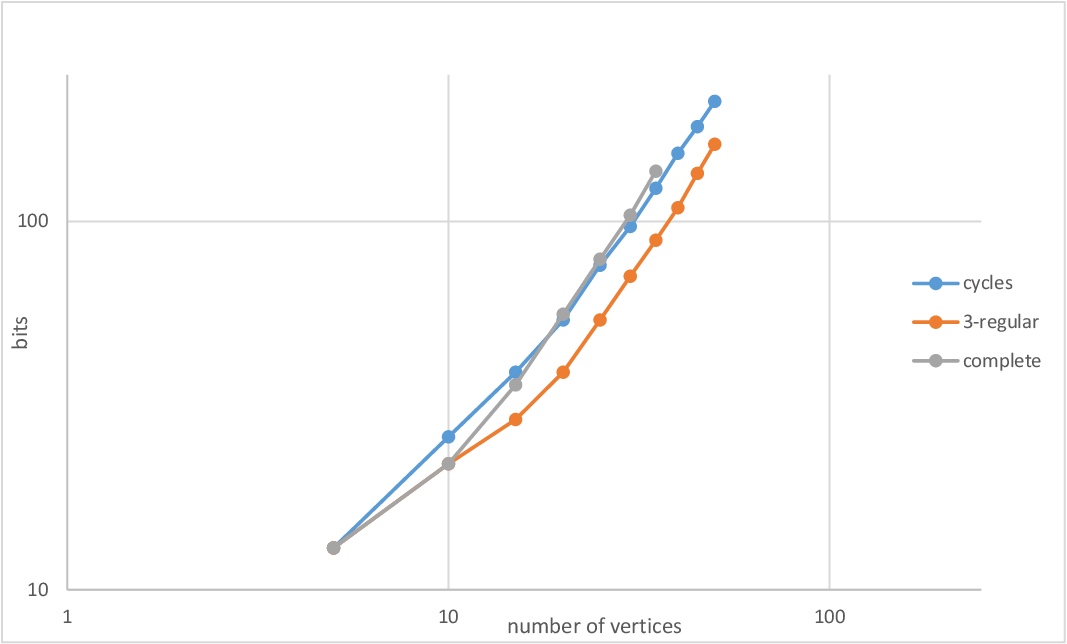}}
\end{tabular} \end{center}
    \caption{Number of bits required for the LLL step to 
    recover the cycle space with probability $0.9$ vs. number of 
    vertices. Log-Log on right.}
\label{fig:exp}
\end{figure}
For 
a fixed number of vertices (and hence edges), we used our implementation 
to search for the number of bits necessary for the probability (over random 
point sets) of 
the LLL step correctly identifying the cycle space of the graph to 
be at least $0.9$.  (Here the configurations were integer, and
the lengths were measured with $\{-1,0,1\}$ error.)
For the nearly $3$-regular graphs, we tried $5$ different 
random ensembles and used the maximum number of bits required for each 
size.
These experiments are ``optimistic'', since 
they use knowledge of the number of vertices to avoid using a
threshold value to identify small vectors in the reduced lattice 
basis. 

In these results, we see that 
for all $3$ families
of graphs, the 
bit requirement grows at about the rate $n^{1.5}$ (see the log-log plot of Figure~\ref{fig:exp}).
More bits are needed for the cycle graphs
than the nearly $3$-regular graphs. This is perhaps due to the 
fact that the nearly $3$-regular graphs have basis cycles, with
each cycle much smaller than $n$.
For the complete graphs, 
where $m=O(n^2)$, this gives us growth that
is somewhat sub-linear in $m$.
  All of this behaviour is significantly 
better than the pessimistic
$m^2$ bounds of our theory. On the other hand, even for moderate
sized graphs, the required bit accuracy quickly becomes higher than we would be able to get from a physical measurement system.

The data sets generated during and/or analysed during the current study are available from the corresponding author on reasonable request.


\begin{thebibliography}{38}
    \providecommand{\natexlab}[1]{#1}
    \providecommand{\url}[1]{\texttt{#1}}
    \expandafter\ifx\csname urlstyle\endcsname\relax
      \providecommand{\doi}[1]{doi: #1}\else
      \providecommand{\doi}{doi: \begingroup \urlstyle{rm}\Url}\fi
    
    \bibitem[Andoni et~al.(2017)Andoni, Hsu, Shi, and Sun]{andoni}
    A.~Andoni, D.~Hsu, K.~Shi, and X.~Sun.
    \newblock Correspondence retrieval.
    \newblock In \emph{Conference on Learning Theory}, pages 105--126. PMLR, 2017.
    
    \bibitem[Billinge et~al.(2018)Billinge, Duxbury, Gon{\c{c}}alves, Lavor, and
      Mucherino]{dux3}
    S.~J. Billinge, P.~M. Duxbury, D.~S. Gon{\c{c}}alves, C.~Lavor, and
      A.~Mucherino.
    \newblock Recent results on assigned and unassigned distance geometry with
      applications to protein molecules and nanostructures.
    \newblock \emph{Annals of Operations Research}, 271:\penalty0 161--203, 2018.
    
    \bibitem[Bixby and Cunningham(1980)]{bixby}
    R.~E. Bixby and W.~H. Cunningham.
    \newblock Converting linear programs to network problems.
    \newblock \emph{Math. Oper. Res.}, 5\penalty0 (3):\penalty0 321--357, 1980.
    \newblock \doi{10.1287/moor.5.3.321}.
    
    \bibitem[Boutin and Kemper(2004)]{BK1}
    M.~Boutin and G.~Kemper.
    \newblock On reconstructing {$n$}-point configurations from the distribution of
      distances or areas.
    \newblock \emph{Adv. in Appl. Math.}, 32\penalty0 (4):\penalty0 709--735, 2004.
    \newblock \doi{10.1016/S0196-8858(03)00101-5}.
    
    \bibitem[Cieliebak et~al.(2003)Cieliebak, Eidenbenz, and Penna]{cie1}
    M.~Cieliebak, S.~Eidenbenz, and P.~Penna.
    \newblock Noisy data make the partial digest problem np-hard.
    \newblock In \emph{International Workshop on Algorithms in Bioinformatics},
      pages 111--123. Springer, 2003.
    
    \bibitem[Connelly(1991)]{connelly}
    R.~Connelly.
    \newblock On generic global rigidity.
    \newblock In \emph{Applied geometry and discrete mathematics}, volume~4 of
      \emph{DIMACS Ser. Discrete Math. Theoret. Comput. Sci.}, pages 147--155.
      Amer. Math. Soc., Providence, RI, 1991.
    \newblock \doi{10.1090/dimacs/004/11}.
    
    \bibitem[Connelly(2005)]{conGR}
    R.~Connelly.
    \newblock Generic global rigidity.
    \newblock \emph{Discrete Comput. Geom.}, 33\penalty0 (4):\penalty0 549--563,
      2005.
    \newblock \doi{10.1007/s00454-004-1124-4}.
    
    \bibitem[Crippen and Havel(1988)]{crippen}
    G.~M. Crippen and T.~F. Havel.
    \newblock \emph{Distance geometry and molecular conformation}, volume~15 of
      \emph{Chemometrics Series}.
    \newblock Research Studies Press, Ltd., Chichester; John Wiley \& Sons, Inc.,
      New York, 1988.
    
    \bibitem[Dattorro(2015)]{dattorro}
    J.~Dattorro.
    \newblock Convex optimization and {E}uclidean distance geometry.
    \newblock Online book, 2015.
    \newblock URL
      \url{https://ccrma.stanford.edu/~dattorro/0976401304_v2015.04.11.pdf}.
    
    \bibitem[Diakonikolas and Kane(2022)]{diak}
    I.~Diakonikolas and D.~Kane.
    \newblock Non-gaussian component analysis via lattice basis reduction.
    \newblock In \emph{Conference on Learning Theory}, pages 4535--4547. PMLR,
      2022.
    
    \bibitem[Diestel(2017)]{diestel}
    R.~Diestel.
    \newblock \emph{Graph theory}, volume 173 of \emph{Graduate Texts in
      Mathematics}.
    \newblock Springer, Berlin, fifth edition, 2017.
    \newblock ISBN 978-3-662-53621-6.
    \newblock \doi{10.1007/978-3-662-53622-3}.
    
    \bibitem[Duxbury et~al.(2016)Duxbury, Granlund, Gujarathi, Juhas, and
      Billinge]{dux2}
    P.~M. Duxbury, L.~Granlund, S.~Gujarathi, P.~Juhas, and S.~J. Billinge.
    \newblock The unassigned distance geometry problem.
    \newblock \emph{Discrete Applied Mathematics}, 204:\penalty0 117--132, 2016.
    
    \bibitem[Frieze(1986)]{frieze}
    A.~M. Frieze.
    \newblock On the {L}agarias-{O}dlyzko algorithm for the subset sum problem.
    \newblock \emph{SIAM J. Comput.}, 15\penalty0 (2):\penalty0 536--539, 1986.
    \newblock \doi{10.1137/0215038}.
    
    \bibitem[Gamarnik et~al.(2021)Gamarnik, K{\i}z{\i}lda{\u{g}}, and Zadik]{gam2}
    D.~Gamarnik, E.~C. K{\i}z{\i}lda{\u{g}}, and I.~Zadik.
    \newblock Inference in high-dimensional linear regression via lattice basis
      reduction and integer relation detection.
    \newblock \emph{IEEE Transactions on Information Theory}, 67\penalty0
      (12):\penalty0 8109--8139, 2021.
    
    \bibitem[Gkioulekas et~al.(2024)Gkioulekas, Gortler, Theran, and
      Zickler]{loopsTri}
    I.~Gkioulekas, S.~J. Gortler, L.~Theran, and T.~Zickler.
    \newblock Trilateration using unlabeled path or loop lengths.
    \newblock \emph{Discrete Comput. Geom.}, 71\penalty0 (2):\penalty0 399--441,
      2024.
    \newblock \doi{10.1007/s00454-023-00605-x}.
    
    \bibitem[Gortler and Theran(2022)]{noConst}
    S.~J. Gortler and Theran.
    \newblock Lattices without a big constant and with noise.
    \newblock \emph{arXiv preprint arXiv:2204.12340}, 2022.
    
    \bibitem[Gortler et~al.(2010)Gortler, Healy, and Thurston]{ght}
    S.~J. Gortler, A.~D. Healy, and D.~P. Thurston.
    \newblock Characterizing generic global rigidity.
    \newblock \emph{Amer. J. Math.}, 132\penalty0 (4):\penalty0 897--939, 2010.
    \newblock \doi{10.1353/ajm.0.0132}.
    
    \bibitem[Gortler et~al.(2019)Gortler, Theran, and Thurston]{gugr}
    S.~J. Gortler, L.~Theran, and D.~P. Thurston.
    \newblock Generic unlabeled global rigidity.
    \newblock In \emph{Forum of Mathematics, Sigma}, volume~7. Cambridge University
      Press, 2019.
    
    \bibitem[Hendrickson(1992)]{hendrickson}
    B.~Hendrickson.
    \newblock Conditions for unique graph realizations.
    \newblock \emph{SIAM J. Comput.}, 21\penalty0 (1):\penalty0 65--84, 1992.
    \newblock \doi{10.1137/0221008}.
    
    \bibitem[Jackson and Jord\'{a}n(2005)]{JJ}
    B.~Jackson and T.~Jord\'{a}n.
    \newblock Connected rigidity matroids and unique realizations of graphs.
    \newblock \emph{J. Combin. Theory Ser. B}, 94\penalty0 (1):\penalty0 1--29,
      2005.
    \newblock \doi{10.1016/j.jctb.2004.11.002}.
    
    \bibitem[Juh{\'a}s et~al.(2006)Juh{\'a}s, Cherba, Duxbury, Punch, and
      Billinge]{dux1}
    P.~Juh{\'a}s, D.~Cherba, P.~Duxbury, W.~Punch, and S.~Billinge.
    \newblock Ab initio determination of solid-state nanostructure.
    \newblock \emph{Nature}, 440\penalty0 (7084):\penalty0 655--658, 2006.
    
    \bibitem[Lagarias and Odlyzko(1985)]{LO}
    J.~C. Lagarias and A.~M. Odlyzko.
    \newblock Solving low-density subset sum problems.
    \newblock \emph{J. Assoc. Comput. Mach.}, 32\penalty0 (1):\penalty0 229--246,
      1985.
    \newblock \doi{10.1145/2455.2461}.
    
    \bibitem[Lemke et~al.(2003)Lemke, Skiena, and Smith]{lem1}
    P.~Lemke, S.~S. Skiena, and W.~D. Smith.
    \newblock Reconstructing sets from interpoint distances.
    \newblock In \emph{Discrete and computational geometry}, pages 597--631.
      Springer, 2003.
    
    \bibitem[Lenstra et~al.(1982)Lenstra, Lenstra, and Lov\'{a}sz]{LLL}
    A.~K. Lenstra, H.~W. Lenstra, Jr., and L.~Lov\'{a}sz.
    \newblock Factoring polynomials with rational coefficients.
    \newblock \emph{Math. Ann.}, 261\penalty0 (4):\penalty0 515--534, 1982.
    \newblock \doi{10.1007/BF01457454}.
    
    \bibitem[Lovasz(1985)]{lovasz-ear}
    L.~Lovasz.
    \newblock Computing ears and branchings in parallel.
    \newblock In \emph{26th Annual Symposium on Foundations of Computer Science
      (sfcs 1985)}, pages 464--467, 1985.
    \newblock \doi{10.1109/SFCS.1985.16}.
    
    \bibitem[Mohar(1991)]{mohar}
    B.~Mohar.
    \newblock The {L}aplacian spectrum of graphs.
    \newblock In Y.~Alvi, G.~Chartrand, O.~R. Oellermann, and A.~J. Schwenk,
      editors, \emph{Graph Theory, Combinatorics, and Applications}, volume~2,
      pages 871--898. Wiley, 1991.
    
    \bibitem[Nguyen et~al.(2019)Nguyen, Kim, and Shim]{LMRCsurvey}
    L.~T. Nguyen, J.~Kim, and B.~Shim.
    \newblock Low-rank matrix completion: A contemporary survey.
    \newblock \emph{IEEE Access}, 7:\penalty0 94215--94237, 2019.
    \newblock \doi{10.1109/ACCESS.2019.2928130}.
    
    \bibitem[Nguyen and Stehl{\'e}(2006)]{ave}
    P.~Q. Nguyen and D.~Stehl{\'e}.
    \newblock {LLL} on the average.
    \newblock In \emph{International Algorithmic Number Theory Symposium}, pages
      238--256. Springer, 2006.
    
    \bibitem[Oxley(2011)]{oxley}
    J.~Oxley.
    \newblock \emph{Matroid theory}, volume~21 of \emph{Oxford Graduate Texts in
      Mathematics}.
    \newblock Oxford University Press, Oxford, second edition, 2011.
    \newblock ISBN 978-0-19-960339-8.
    \newblock \doi{10.1093/acprof:oso/9780198566946.001.0001}.
    
    \bibitem[Saxe(1979)]{saxe}
    J.~B. Saxe.
    \newblock Embeddability of weighted graphs in $k$-space is strongly {NP}-hard.
    \newblock In \emph{Proceedings of the 17th {A}llerton Conference in
      Communications, Control and Computing}, pages 480--489, 1979.
    
    \bibitem[Seymour(1981)]{sey}
    P.~D. Seymour.
    \newblock Recognizing graphic matroids.
    \newblock \emph{Combinatorica}, 1\penalty0 (1):\penalty0 75--78, 1981.
    \newblock \doi{10.1007/BF02579179}.
    
    \bibitem[Skiena and Sundaram(1994)]{ski1}
    S.~S. Skiena and G.~Sundaram.
    \newblock A partial digest approach to restriction site mapping.
    \newblock \emph{Bulletin of Mathematical Biology}, 56\penalty0 (2):\penalty0
      275--294, 1994.
    
    \bibitem[So and Ye(2006)]{so}
    A.~M.-C. So and Y.~Ye.
    \newblock A semidefinite programming approach to tensegrity theory and
      realizability of graphs.
    \newblock In \emph{Proceedings of the seventeenth annual ACM-SIAM symposium on
      Discrete algorithm}, pages 766--775. Society for Industrial and Applied
      Mathematics, 2006.
    
    \bibitem[Song et~al.(2021)Song, Zadik, and Bruna]{song}
    M.~J. Song, I.~Zadik, and J.~Bruna.
    \newblock On the cryptographic hardness of learning single periodic neurons.
    \newblock \emph{Advances in neural information processing systems},
      34:\penalty0 29602--29615, 2021.
    
    \bibitem[Tutte(1960)]{tutte}
    W.~T. Tutte.
    \newblock An algorithm for determining whether a given binary matroid is
      graphic.
    \newblock \emph{Proc. Amer. Math. Soc.}, 11:\penalty0 905--917, 1960.
    \newblock \doi{10.2307/2034435}.
    
    \bibitem[Whitney(1933)]{whitney}
    H.~Whitney.
    \newblock 2-{I}somorphic {G}raphs.
    \newblock \emph{Amer. J. Math.}, 55\penalty0 (1-4):\penalty0 245--254, 1933.
    \newblock \doi{10.2307/2371127}.
    
    \bibitem[Whitney(1935)]{whitney-matroid}
    H.~Whitney.
    \newblock On the {A}bstract {P}roperties of {L}inear {D}ependence.
    \newblock \emph{Amer. J. Math.}, 57\penalty0 (3):\penalty0 509--533, 1935.
    \newblock \doi{10.2307/2371182}.
    
    \bibitem[Zadik et~al.(2022)Zadik, Song, Wein, and Bruna]{zlat}
    I.~Zadik, M.~J. Song, A.~S. Wein, and J.~Bruna.
    \newblock Lattice-based methods surpass sum-of-squares in clustering.
    \newblock In P.-L. Loh and M.~Raginsky, editors, \emph{Proceedings of Thirty
      Fifth Conference on Learning Theory}, volume 178 of \emph{Proceedings of
      Machine Learning Research}, pages 1247--1248. PMLR, 02--05 Jul 2022.
    \newblock Full version appears as {\tt arXiv:2112.03898}.
    
    \end{thebibliography}
\end{document}